\pgfplotsset{compat=newest}
\numberwithin{equation}{section}
\newtheorem{assumption}[theorem]{Assumption}
\newtheorem{remark}[theorem]{Remark}
\title{A Single-loop Stochastic Riemannian ADMM for Nonsmooth Optimization}
\author{
Jiachen Jin\thanks{College of Science,  National University of Defense Technology, Changsha, 410073,
China (\email{jinjiachen@nudt.edu.cn}, \email{freedeng1208@gmail.com}, \email{wanghongxia@nudt.edu.cn})} 
\and Kangkang Deng\footnotemark[1] \and Hongxia Wang\footnotemark[1] \thanks{Corresponding author.}
}
\begin{document}

\maketitle

\begin{abstract}
We study a class of nonsmooth stochastic optimization problems on Riemannian manifolds. In this work, we propose MARS-ADMM, the first stochastic Riemannian alternating direction method of multipliers with provable near-optimal complexity guarantees. Our algorithm incorporates a momentum-based variance-reduced gradient estimator applied exclusively to the smooth component of the objective, together with carefully designed penalty parameter and dual stepsize updates. Unlike existing approaches that rely on computationally expensive double-loop frameworks, MARS-ADMM operates in a single-loop fashion and requires only a constant number of stochastic gradient evaluations per iteration. 
Under mild assumptions, we establish that MARS-ADMM achieves an iteration complexity of \(\tilde{\mathcal{O}}(\varepsilon^{-3})\), which improves upon the previously best-known bound of \(\mathcal{O}(\varepsilon^{-3.5})\) for stochastic Riemannian operator-splitting methods.  As a result, our analysis closes the theoretical complexity gap between stochastic Riemannian operator-splitting algorithms and stochastic methods for nonsmooth optimization with nonlinear constraints. Notably, the obtained complexity also matches the best-known bounds in deterministic nonsmooth Riemannian optimization, demonstrating that deterministic-level accuracy can be achieved using only constant-size stochastic samples.
\end{abstract}
\begin{keywords}
Stochastic optimization, Riemannian optimization, ADMM, single loop, complexity
\end{keywords}

\begin{AMS}
 65K05, 65K10, 90C15, 90C26
\end{AMS}

\section{Introduction}
We consider the nonsmooth stochastic optimization problem on manifolds:
\begin{equation}\label{op}
	\min_{x\in\mathcal{M}} \{F(x) :=\mathbb{E}_{\xi}[f(x,\xi)]\} + g(Ax),
\end{equation}
where $\mathcal{M}$ is a Riemannian submanifold of $\mathbb{R}^n$, $\xi$ is a random variable in the probability space $\Xi$, $\mathbb{E}_{\xi}$ is the expectation with respect to the random variable $\xi$, $f(\cdot, \xi): \mathbb{R}^n \to \mathbb{R}$ is a smooth function for each $\xi$, $g: \mathbb{R}^m \to \mathbb{R}$ is a convex and possibly nonsmooth function, and $A\in \mathbb{R}^{m\times n}$. In practice, calculating the expectation may be computationally expensive, or the distribution of $\xi$ may be unknown.  Such problems arise in numerous machine learning and signal processing applications, including online sparse principal component analysis \cite{wang2016online}, dictionary recovery \cite{sun2016complete} and tensor factorization \cite{ishteva2011best}, where the data are inherently stochastic and the parameter space exhibits nonlinear geometric constraints.

Owing to the separable structure of the objective function, operator-splitting methods have become a standard and powerful tool for solving problem~\eqref{op}. For the general composite case, a standard reformulation introduces an auxiliary variable $y = Ax$, resulting in a block-separable problem:
\begin{equation}\label{p}
	\min_{x\in\mathcal{M},y} F(x) + g(y),~\text{s.t.}~Ax=y.
\end{equation} 
In the deterministic setting, a broad class of operator-splitting algorithms has been extensively investigated; see, for example, \cite{lai2014splitting,kovnatsky2016madmm,deng2024oracle,deng2023manifold,dengadaptive,xu2024riemannian,xu2025oracle,yuan2024admm,li2025riemannian,el2025convergence}. Among them, Riemannian alternating direction method of multipliers (ADMM) has attracted particular attention due to its intrinsic decoupling property and strong empirical performance. 
However, extending Euclidean ADMM to Riemannian manifolds is highly nontrivial. Due to the inherent nonconvexity induced by manifold constraints,  several early attempts fail to guarantee convergence; see, e.g., \cite{lai2014splitting,kovnatsky2016madmm}. To address these challenges, several variants of Riemannian ADMM have been proposed recently \cite{yuan2024admm,li2025riemannian,dengadaptive}, together with rigorous convergence analyses. Nevertheless, all existing Riemannian ADMM algorithms are restricted to deterministic setting. 

In contrast, the development of stochastic operator-splitting methods \cite{zhang2020primal,deng2024oracle} for solving \eqref{p} remains rather limited. In particular,  \cite{zhang2020primal} proposed a stochastic primal-dual algorithm on manifolds; however, the resulting subproblems are computationally challenging and the analysis relies on a strong structural assumption, namely, that the last block variable does not appear in the nonsmooth term of the objective. When applied to problem \eqref{p}, it attains an iteration complexity of $\mathcal{O}(\varepsilon^{-4})$. \cite{deng2024oracle} developed a stochastic manifold augmented Lagrangian method with an iteration complexity of $\mathcal{O}(\varepsilon^{-3.5})$; however, this algorithm adopts a double-loop framework, which significantly increases  computational cost, thereby limiting its practical scalability. On the other hand, when manifold constraints are treated as general nonlinear constraints,   \cite{shi2025momentum,idrees2025constrained} achieve an iteration complexity of $\mathcal{O}(\varepsilon^{-3})$. This contrast reveals a distinct theoretical complexity gap between existing stochastic Riemannian operator-splitting methods and state-of-the-art stochastic algorithms. This observation naturally leads to the following fundamental question:
\begin{quote}
	\emph{Can we design a stochastic Riemannian operator-splitting algorithm that closes this complexity gap?}
\end{quote}

In this paper, we provide an affirmative answer to this question by proposing a stochastic Riemannian ADMM that achieves an iteration complexity of $\tilde{\mathcal{O}}(\varepsilon^{-3})$, thereby matching the best-known complexity bound with a dependence of a $\log$ factor for stochastic nonsmooth nonlinear constrained optimization. The main contributions of this work are summarized as follows:

\begin{itemize}
	\item We propose MARS-ADMM, the first stochastic Riemannian ADMM algorithm with provable complexity guarantees for solving problem \eqref{p}.  The algorithm adopts a momentum-based variance-reduced gradient estimator applied exclusively to the smooth objective component, together with carefully designed updates of the penalty parameter and dual stepsizes. This design is critical in preventing the variance bound from depending on the penalty parameter, which enables better complexity guarantees in the stochastic Riemannian setting.   In contrast to existing stochastic Riemannian augmented Lagrangian methods \cite{deng2024oracle}, which rely on a double-loop framework, MARS-ADMM operates in a single-loop fashion and requires only $\mathcal{O}(1)$ stochastic gradient samples per iteration, leading to substantially improved computational and sampling efficiency.

	\item Under mild assumptions, our algorithm achieves an iteration complexity of $\tilde{\mathcal{O}}(\epsilon^{-3})$, which improves upon the best-known complexity bounds of $\mathcal{O}(\varepsilon^{-3.5})$ for stochastic Riemannian operator-splitting algorithms \cite{deng2024oracle}.  Owing to its single-loop structure and constant per-iteration sampling cost, the oracle complexity and sample complexity match the same order as the iteration complexity. As a consequence, our results close the theoretical complexity gap between stochastic Riemannian operator-splitting methods and the best-known stochastic algorithms for nonsmooth optimization under general nonlinear constraints \cite{shi2025momentum,shi2025bregman}.   Moreover, the obtained complexity matches the best-known bounds for deterministic nonsmooth Riemannian optimization \cite{dengadaptive,beck2023dynamic}, demonstrating that deterministic-level accuracy can be attained using only constant-size stochastic samples.

\end{itemize}

Table \ref{table1} summarizes our algorithm and several stochastic methods to obtain an $\epsilon$-stationary point, where  $\tilde{\mathcal{O}}$ denotes the asymptotic upper bound that ignores log factors. 
We do not list the R-ProxSPB in \cite{wang2022riemannian} since each of its oracle calls involves a semismooth Newton subroutine whose overall complexity is unclear. 

\begin{table*}[]
	\centering
	\caption{Comparison of the oracle complexity of several methods to solve stochastic composite problems  $\min_x\{F(x):=\mathbb{E}_{\xi}[f(x,\xi)]\} + g(Ax)$, where $F$ and $c$ are smooth but possibly nonconvex functions, $g$ and $h$ are convex but possibly nonsmooth functions, and $d$ is smooth and convex function. Note that the result of MLALM requires an initial near-feasibility; otherwise, it is $\mathcal{O}(\epsilon^{-4})$.} 
	\resizebox{1\columnwidth}{!}{
		\begin{tabular}{ccccc}
			\hline
			Algorithms & Constraints &  Online &  Single-loop & Complexity \\ \hline
			finite-sum ADMM \cite{zhang2020primal} & 
			compact submanifold & No &  No  &  $\mathcal{O}(\epsilon^{-4})$\\
			subgradient \cite{li2021weakly} & Stiefel manifold &  Yes & Yes  &  $\mathcal{O}(\epsilon^{-4})$\\
			smoothing \cite{peng2023riemannian} & 
			compact submanifold &  Yes & Yes &   $\mathcal{O}(\epsilon^{-5})$\\
			StoManIAL \cite{deng2024oracle} & compact submanifold & Yes & No  &  $\tilde{\mathcal{O}}(\epsilon^{-3.5})$ \\
			LCSPG \cite{boob2025level} & $c(x)+h(x)\leq 0$ &  Yes & No &  $\mathcal{O}(\epsilon^{-4})$\\
			CoSTA \cite{idrees2025constrained} & $c(x)\leq 0,d(x)\leq 0$ &  Yes & Yes &  $\tilde{\mathcal{O}}(\epsilon^{-3})$\\
			MLALM \cite{shi2025momentum} & $c(x)\leq 0$ & Yes & Yes &   $\mathcal{O}(\epsilon^{-3})$\\
			\textbf{MARS-ADMM (this paper)}  & compact submanifold & Yes & Yes &  $\tilde{\mathcal{O}}(\epsilon^{-3})$\\ \hline
	\end{tabular}}
	\label{table1}
\end{table*}

\subsection{Literature Review} 

{\bf Riemannian ADMM.} 
Due to the separable block structure of problem \eqref{p}, the ADMM is a widely used operator-splitting algorithm for obtaining stationary points. However, it has primarily been studied in deterministic settings. 
One of the earliest attempts to decouple the manifold constraints and the nonsmooth term is the splitting orthogonality constraints (SOC) in \cite{lai2014splitting}, which involves solving an unconstrained subproblem and a manifold projection subproblem. Manifold ADMM (MADMM) in \cite{kovnatsky2016madmm} builds on this by treating the subproblem as smooth Riemannian optimization and applying a proximal operator to the nonsmooth term $g$ in the subproblem. Despite promising empirical results, both SOC and MADMM lack theoretical guarantees. Later work in \cite{li2025riemannian} applied Moreau smoothing to the nonsmooth term $g$ to provide theoretical guarantees, a strategy that was subsequently refined with adaptive smoothing \cite{yuan2024admm}. However, such smoothing techniques essentially convert the original nonsmooth problem into a smooth one. This alters the nature of the ADMM algorithm, as the smoothed approximation no longer contains difficult coupling and can be addressed directly using gradient-based methods \cite{beck2023dynamic}. An adaptive Riemannian ADMM in \cite{dengadaptive} was subsequently proposed to solve the original nonsmooth problem directly, proving convergence without smoothing. While certain non-convex Riemannian ADMM methods with convergence guarantees exist for specific smooth applications, they are not suitable for nonsmooth problems \cite{lu2018nonconvex,chen2025non}. In addition to the above deterministic methods, a finite-sum variant in \cite{zhang2020primal} with complexity $\mathcal{O}(\epsilon^{-2})$ is proposed for a specific stochastic problem that excludes the last block variable from nonsmooth terms and manifold constraints. This contrasts sharply with problem \eqref{p}. Relaxing this condition results in a higher complexity of $O(\epsilon^{-4})$ for solving \eqref{p}.

{\bf Stochastic Riemannian optimization.} 
There exist several methods for solving Riemannian optimization with nonsmooth expectation objectives in problem \eqref{op} directly. 
If the nonsmooth term $g$ vanishes in \eqref{op}, Riemannian stochastic gradient descent (R-SGD) is an efficient algorithm for solving smooth stochastic problems \cite{bonnabel2013stochastic}. Furthermore, R-SVRG \cite{zhang2016riemannian}, R-SRG \cite{kasai2018riemannian}, R-SPIDER \cite{zhang2018r} and R-SRM \cite{han2021riemannian} utilize the variance reduction technique to improve the convergence rate of R-SGD and achieve the optimal oracle complexity of $\mathcal{O}(\epsilon^{-3})$.  
On the other hand, the nonsmooth term $g$ makes developing stochastic algorithms for problem \eqref{op} challenging. \cite{li2021weakly} designed a Riemannian stochastic subgradient method for weakly convex problems on the Stiefel manifold with a certain complexity $\mathcal{O}(\epsilon^{-4})$. \cite{wang2022riemannian} proposed Riemannian stochastic proximal gradient methods for solving the nonsmooth optimization over the Stiefel manifold. While these methods achieve the best-known result of $\mathcal{O}(\epsilon^{-3})$, each oracle calls involves a subroutine whose overall complexity is unspecified. By leveraging Moreau smoothing, \cite{peng2023riemannian} presented a Riemannian stochastic smoothing algorithm with $\mathcal{O}(\epsilon^{-5})$ complexity. Recent work by \cite{deng2024oracle} developed a stochastic ALM incorporating Riemannian momentum with improved complexity of $\tilde{\mathcal{O}}(\epsilon^{-3.5})$. 

{\bf Nonlinear constrained stochastic optimization.} 
If the manifold constraints are expressed as equality constraints $c(x)=0$, problem \eqref{op} can be viewed as a general constrained optimization problem with a nonconvex nonsmooth objective function. When the nonsmooth term $g$ is absent, recent single-loop momentum-based algorithms in \cite{alacaoglu2024complexity} achieve a certain complexity bound $\tilde{\mathcal{O}}(\epsilon^{-4})$. Another method in \cite{lu2024variance} uses truncated recursive momentum and increasing penalty parameters to attain better complexity $\tilde{\mathcal{O}}(\epsilon^{-3})$. Recently, \cite{cui2025exact} studied an exact penalty method within a double-loop algorithm framework that establish an oracle complexity $\mathcal{O}(\epsilon^{-3})$.  
For stochastic composite problems with general constraints $c(x)+h(x)\leq 0$, a level constrained stochastic proximal gradient method with an increasing constraint level scheme that guarantees an $\mathcal{O}(\epsilon^{-4})$ oracle complexity under certain constraints qualifications \cite{boob2025level}. An accelerated successive convex approximation algorithm with recursive momentum achieves improved complexity $\tilde{\mathcal{O}}(\epsilon^{-3})$ under a parameterized constraint qualification \cite{idrees2025constrained}. Additionally, using a linearized augmented Lagrangian function with recursive momentum yields a stochastic algorithm in \cite{shi2025momentum} with a complexity of $\mathcal{O}(\epsilon^{-4})$, which can be improved to $\mathcal{O}(\epsilon^{-3})$ if the initial point is nearly feasible.

\subsection{Notation}
The $n$-dimensional Euclidean space is denoted by $\mathbb{R}^n$ and the inner product is denoted by $\langle \cdot,\cdot \rangle$. $\|\cdot\|$ denotes the Euclidean norm of a vector or $\|A\|_2=\max_{\|x\|=1}\|Ax\|$ of a matrix A. $\|\cdot\|_1$ denotes the $\ell_1$ norm of a vector or $\|A\|_1=\sum_{ij} |A_{ij}|$ for a matrix $A$. The distance from $x$ to set $\mathcal{C}$ is denoted by ${\rm dist}(x, \mathcal{C})= \min_{y\in \mathcal{C}}\|x-y\|$. $\nabla F(x)$ and ${\rm grad} F(x)$ denote the Euclidean gradient and Riemannian gradient of a function $F$, respectively. ${\rm grad}_x \mathcal{L}_\rho(x,y,\lambda)$ denotes the Riemannian gradient of $\mathcal{L}_\rho(x,y,\lambda)$ with respect to $x$.

\subsection{Organization}
The outline of this paper is as follows: In Section \ref{sec2}, we introduce the preliminaries concerning notation and terminology. In Sections \ref{sec3} and \ref{sec4}, we present a momentum-based adaptive Riemannian stochastic ADMM algorithm and analyze its convergence. Section \ref{sec5} illustrates the efficiency of our proposed method through several numerical experiments. Section \ref{sec6} provides a brief conclusion.

\section{Preliminaries}\label{sec2} 
This section begins by presenting the basic setup and mathematical tools of Riemannian optimisation. Further details can be found in \cite{absil2009optimization,boumal2023introduction}. Next, we state the proximal operator and define the optimality measure. 

\subsection{Riemannian Optimization}
An $n$-dimensional smooth manifold $\mathcal{M}$ is a topological space equipped with a smooth structure, where each point has a neighborhood that is a diffeomorphism of $\mathbb{R}^n$. 
\begin{definition}[Tangent Space]
	Consider a manifold $\mathcal{M}$ embedded in Euclidean space $\mathbb{R}^n$. For any $x\in\mathcal{M}$, the tangent space $T_x\mathcal{M}$ at $x$ is a linear subspace that consists of the derivatives of all differentiable curves on $\mathcal{M}$ passing through $x$:
	\begin{equation}\label{def:TS}
		T_x\mathcal{M}=\{\gamma'(0):\gamma(0)=x,\gamma([-\delta,\delta])\subset\mathcal{M}~{\rm for~some~\delta>0},\gamma~{\rm is~differentiable}\}.
	\end{equation}
\end{definition}
If $\mathcal{M}$ is a Riemannian submanifold of Euclidean space $\mathbb{R}^n$, the inner product is defined as the Euclidean inner product. 
The Riemannian gradient ${\rm grad} F(x)\in T_x\mathcal{M}$ is a tangent vector satisfying
\[\langle v,{\rm grad} F(x)\rangle=\left.\frac{d(F(\gamma(t)))}{dt}\right|_{t=0},\forall v\in T_x\mathcal{M},\]
where $\gamma(t)$ is a curve as described in \eqref{def:TS}. If $\mathcal{M}$ is a embedded compact Riemannian submanifold, the Riemannian gradient is given by ${\rm grad} F(x) = \mathcal{P}_{T_k\mathcal{M}}(\nabla F(x))$, where $\mathcal{P}_{T_k\mathcal{M}}$ is the projection onto $T_x\mathcal{M}$. 
The retraction turns an element of $T_x\mathcal{M}$ into a point in $\mathcal{M}$.
\begin{definition}[Retraction]\label{def-retr}
	A retraction on a manifold $\mathcal{M}$ is a smooth mapping $\mathcal{R}:T\mathcal{M}\rightarrow \mathcal{M}$ with the following properties. Let $\mathcal{R}_x:T_x\mathcal{M} \rightarrow \mathcal{M}$ be the restriction of $\mathcal{R}$ at $x$. It satisfies
	$\mathcal{R}_x(0_x) = x$ and $d\mathcal{R}_x(0_x) = id_{T_x\mathcal{M}}$, where $0_x$ is the zero element of $T_x\mathcal{M}$ and $id_{T_x\mathcal{M}}$ is the identity mapping on $T_x\mathcal{M}$.
\end{definition}
We have the Lipschitz-type inequalities on the retraction on compact submanifold.
\begin{lemma}\cite{boumal2019global}\label{proposi:ret-lips}
	Let $\mathcal{R}$ be a retraction operator on a compact submanifold $\mathcal{M}$. Then, there exist two constants $p,q>0$ such that for all $x\in \mathcal{M},u \in T_{x}\mathcal{M}$, we have
	\[\|\mathcal{R}_x(u) - x\| \leq p \|u\|,~\|\mathcal{R}_x(u) - x-u\| \leq q \|u\|^2.\]
\end{lemma}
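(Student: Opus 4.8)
The plan is to exploit the compactness of $\mathcal{M}$ together with the smoothness of $\mathcal{R}$ to obtain \emph{uniform} (i.e.\ $x$-independent) bounds on the first and second derivatives of $\mathcal{R}$, and then to read off the two inequalities from first- and second-order Taylor expansions along a ray. To make the differentiation well defined across varying base points, I would work with the ambient representation of the retraction: since $\mathcal{M}\subset\mathbb{R}^n$ and $tu\in T_x\mathcal{M}$ for every $t$, the curve $t\mapsto \gamma(t):=\mathcal{R}_x(tu)$ is a smooth $\mathbb{R}^n$-valued path, so I can differentiate in the scalar $t$ directly rather than intrinsically on the tangent bundle.

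The first step is to reduce both claims to the regime $\|u\|\le 1$. For $\|u\|>1$, compactness of $\mathcal{M}$ furnishes a finite diameter $D:=\sup_{x,x'\in\mathcal{M}}\|x-x'\|$, so that $\|\mathcal{R}_x(u)-x\|\le D\le D\|u\|$, and consequently $\|\mathcal{R}_x(u)-x-u\|\le \|\mathcal{R}_x(u)-x\|+\|u\|\le (D+1)\|u\|\le (D+1)\|u\|^2$. Hence it suffices to establish the estimates on the set $K:=\{(x,u):x\in\mathcal{M},\ u\in T_x\mathcal{M},\ \|u\|\le 1\}$, which is compact because $\mathcal{M}$ is compact and $K$ is a closed bounded subset of the unit tangent-ball bundle.

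On $K$, for the first inequality I would write $\gamma'(t)=d\mathcal{R}_x(tu)[u]$; since the differential is linear in its perturbation slot and its operator norm is continuous on the compact $K$, we have $\|\gamma'(t)\|\le M_1\|u\|$ with $M_1:=\sup_K\|d\mathcal{R}_x(\cdot)\|<\infty$. The fundamental theorem of calculus, $\mathcal{R}_x(u)-x=\int_0^1\gamma'(t)\,dt$, then yields $\|\mathcal{R}_x(u)-x\|\le M_1\|u\|$. For the second inequality I invoke the retraction axioms twice: $\gamma(0)=\mathcal{R}_x(0_x)=x$ and $\gamma'(0)=d\mathcal{R}_x(0_x)[u]=u$. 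Taylor's theorem with integral remainder gives $\mathcal{R}_x(u)=x+u+\int_0^1(1-t)\gamma''(t)\,dt$, and since $\gamma''(t)=d^2\mathcal{R}_x(tu)[u,u]$ is bilinear in $u$ we obtain $\|\gamma''(t)\|\le M_2\|u\|^2$ with $M_2:=\sup_K\|d^2\mathcal{R}_x(\cdot)\|<\infty$; therefore $\|\mathcal{R}_x(u)-x-u\|\le \tfrac12 M_2\|u\|^2$. Taking $p=\max(M_1,D)$ and $q=\max(\tfrac12 M_2,\,D+1)$ combines the two regimes and closes both bounds.

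The main technical point — and the only place where the hypotheses are genuinely used — is the uniformity of $p$ and $q$ over the base point $x$; this is exactly where compactness of $\mathcal{M}$ is indispensable, since without it the suprema of $\|d\mathcal{R}_x(\cdot)\|$ and $\|d^2\mathcal{R}_x(\cdot)\|$ over $K$ could fail to be finite. A secondary care point is the quadratic scaling $\|\gamma''(t)\|\le M_2\|u\|^2$, which I would justify by the bilinearity of the second differential in the perturbation direction: restricting to $(x,tu)\in K$ and pulling out this degree-two homogeneity in $u$ produces the clean $\|u\|^2$ dependence.
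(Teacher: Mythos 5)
The paper states this lemma as a cited result from \cite{boumal2019global} and gives no proof of its own; your argument is correct and is essentially the standard one used in that reference — reduce to $\|u\|\le 1$ via the diameter bound, use compactness of the unit-ball tangent bundle to get uniform bounds on the first and second differentials of $\mathcal{R}$, and conclude by the fundamental theorem of calculus and Taylor's theorem with integral remainder along $t\mapsto\mathcal{R}_x(tu)$, invoking $\mathcal{R}_x(0_x)=x$ and $d\mathcal{R}_x(0_x)=\mathrm{id}$.
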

\begin{lemma}\cite{boumal2019global}\label{lem:Lpb}
	Let $\mathcal{M}$ is a compact Riemannian submanifold of $\mathbb{R}^n$ and $\mathcal{R}$ be a retraction on $\mathcal{M}$. If the function $f(x)$ has Lipschitz continuous gradient in the convex hull of $\mathcal{M}$. 
	Then there exists constant $L_f \geq 0$ independent of $x$ such that, for all $x_k$ among $x_0, x_1, \dots$ generated by a specified algorithm, the composition $\hat{f}_k=f\circ \mathcal{R}_{x_k}$ satisfies for all $\eta\in T_x\mathcal{M}$,
	\[\|\hat{f}_k(\eta)-f(x_k)+\langle \eta, {\rm grad} f(x_k)\rangle\|\leq\frac{L_f}{2}\|\eta\|^2.\]
\end{lemma}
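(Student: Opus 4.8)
The plan is to treat the pullback $\hat f_k = f\circ\mathcal{R}_{x_k}$ as an ordinary smooth function on the vector space $T_{x_k}\mathcal{M}$ and to establish a quadratic descent-type bound on its first-order Taylor remainder at the origin. First I would record the two base facts that follow directly from Definition~\ref{def-retr}: since $\mathcal{R}_{x_k}(0_{x_k})=x_k$ we have $\hat f_k(0)=f(x_k)$, and since $d\mathcal{R}_{x_k}(0_{x_k})=\mathrm{id}_{T_{x_k}\mathcal{M}}$ the chain rule gives $\nabla\hat f_k(0)=\mathcal{P}_{T_{x_k}\mathcal{M}}(\nabla f(x_k))={\rm grad} f(x_k)$. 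With these identities in hand, the quantity to be bounded is exactly the remainder of the first-order Taylor expansion of $\hat f_k$ about the origin.

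Next I would express that remainder through the fundamental theorem of calculus,
\[\hat f_k(\eta)-\hat f_k(0)-\langle\eta,\nabla\hat f_k(0)\rangle=\int_0^1\langle\nabla\hat f_k(t\eta)-\nabla\hat f_k(0),\eta\rangle\,dt,\]
so that it suffices to control $\|\nabla\hat f_k(t\eta)-\nabla\hat f_k(0)\|$ by a multiple of $t\|\eta\|$. Using the formula $\nabla\hat f_k(\eta)=d\mathcal{R}_{x_k}(\eta)^{*}[\nabla f(\mathcal{R}_{x_k}(\eta))]$, I would split this difference into two pieces: one in which $\nabla f$ is evaluated at two nearby points, and one in which the adjoint of the retraction differential varies. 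The first piece is handled by the Lipschitz continuity of $\nabla f$ on the convex hull of $\mathcal{M}$ together with the retraction bound $\|\mathcal{R}_{x_k}(t\eta)-x_k\|\le p\,t\|\eta\|$ from Lemma~\ref{proposi:ret-lips}; the second piece is handled by the smoothness of $\mathcal{R}$, whose differential is Lipschitz in its argument on bounded sets.

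The crux is to make the resulting constant $L_f$ uniform, that is, independent of $x_k$ and of the base point. Here I would invoke compactness: because $\mathcal{M}$ is a compact submanifold, $\mathcal{R}_{x_k}(t\eta)$ lies in $\mathcal{M}$ and hence in the convex hull where $\nabla f$ is Lipschitz, and the map $(x,\eta)\mapsto d\mathcal{R}_{x}(\eta)$ together with its relevant derivatives is uniformly bounded over the compact set of base points paired with any bounded range of $\eta$. This is precisely why the statement is phrased for the iterates $x_0,x_1,\dots$ of a fixed algorithm: these iterates remain on the compact manifold and the displacements $\eta$ stay within a bounded region, so the second-order behavior of the retraction can be controlled by a single constant. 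Combining the two bounds yields $\|\nabla\hat f_k(t\eta)-\nabla\hat f_k(0)\|\le L_f\,t\|\eta\|$, and substituting this into the integral and applying Cauchy--Schwarz produces the claimed $\tfrac{L_f}{2}\|\eta\|^2$ bound.

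I expect the main obstacle to be the uniformity of $L_f$ rather than any individual estimate: each separate bound (Lipschitzness of $\nabla f$, the retraction inequalities of Lemma~\ref{proposi:ret-lips}, and smoothness of $d\mathcal{R}$) is routine, but assembling them into one constant that works simultaneously for every iterate requires carefully exploiting both the compactness of $\mathcal{M}$ and the boundedness of the admissible tangent displacements generated by the algorithm.
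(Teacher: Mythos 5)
The paper does not prove this lemma itself; it is imported verbatim from \cite{boumal2019global}, so the relevant comparison is with the argument given there. Your proof is essentially correct but takes a genuinely heavier route. The standard proof never touches the gradient of the pullback as a function on $T_{x_k}\mathcal{M}$: it sets $y=\mathcal{R}_{x_k}(\eta)$, applies the ordinary Euclidean descent lemma to $f$ along the segment $[x_k,y]\subset\mathrm{conv}(\mathcal{M})$ to get $|f(y)-f(x_k)-\langle\nabla f(x_k),y-x_k\rangle|\leq\frac{L_{\nabla f}}{2}\|y-x_k\|^2$, and then replaces $\langle\nabla f(x_k),y-x_k\rangle$ by $\langle{\rm grad}f(x_k),\eta\rangle$ using the two inequalities of Lemma~\ref{proposi:ret-lips} (the error terms are $\frac{L_{\nabla f}p^2}{2}\|\eta\|^2$ and $qG\|\eta\|^2$ with $G=\max_{x\in\mathcal{M}}\|\nabla f(x)\|$, giving the explicit constant $L_f=L_{\nabla f}p^2+2qG$, exactly the combination that reappears in Lemma~\ref{Lrho-retr-smooth}). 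Your route instead bounds $\|\nabla\hat f_k(t\eta)-\nabla\hat f_k(0)\|$, which forces you to control the operator norm of $d\mathcal{R}_{x_k}(\eta)$ and its Lipschitz modulus in $\eta$ --- second-order information about the retraction that Lemma~\ref{proposi:ret-lips} does not supply --- and to restrict $\eta$ to a bounded set, whereas the statement claims the bound for all $\eta\in T_x\mathcal{M}$. What your approach buys is a strictly stronger conclusion (a genuine Lipschitz gradient for the pullback, hence a two-sided quadratic bound), but for the purposes of this paper the one-sided retraction-smoothness obtained by the simpler ambient-space argument is all that is used, and it avoids the extra regularity hypotheses your proof would need to make uniform.
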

The vector transport $\mathcal{T}_x^y$ is an operator that transports a tangent vector $v \in T_x \mathcal{M}$ to the tangent space $T_y \mathcal{M}$, i.e., $\mathcal{T}_x^y(v) \in T_y \mathcal{M}$.
\begin{definition}[Vector Transport]
	The vector transport $\mathcal{T}$ is a smooth mapping $T\mathcal{M}\oplus T\mathcal{M}\to T\mathcal{M}:(\eta_x,\xi_x)\mapsto\mathcal{T}_{\eta_x}(\xi_x) \in T\mathcal{M}$ satisfying the following properties for all $x \in \mathcal{M}$:
	(1) for any $\xi_x\in T_x\mathcal{M}$, $T_{0_x}\xi_x=\xi_x$.
	(2) $\mathcal{T}_{\eta_x}(a\xi_x+b\omega_x)=a\mathcal{T}_{\eta_x}(\xi_x)+b\mathcal{T}_{\eta_x}(\omega_x)$.
\end{definition}
When there exists $\mathcal{R}$ such that $y = \mathcal{R}_x(\eta_x)$, we write $\mathcal{T}^{y}_{x}(\xi_x)=\mathcal{T}_{\eta_x}(\xi_x)$. This paper considers the isometric vector transport $\mathcal{T}^{y}_{x}$, which satisfies $\langle u,v\rangle=\langle \mathcal{T}^{y}_{x}u,\mathcal{T}^{y}_{x}v\rangle$ for all $u,v\in T_x\mathcal{M}$. 
The following lemma shows that the Lipschitz continuity of Riemannian gradient can be deduced by the Lipschitz continuity of Euclidean gradient.
\begin{lemma}\cite{deng2024oracle}\label{lem:rieman-lip}
	Suppose that $\mathcal{M}$ is a compact submanifold embedded in the Euclidean space, given 
	$x,y\in \mathcal{M}$ and $u\in T_x\mathcal{M}$, the vector transport $\mathcal{T}$ is defined as $\mathcal{T}_x^y(u)= d\mathcal{R}_x[\omega](u)$, where $y = \mathcal{R}_x(\omega)$. Denote $\zeta=\max_{x\in \text{conv}(\mathcal{M})} \| d^2 \mathcal{R}_x(\cdot)\|$ and $G= \max_{x\in \mathcal{M}} \|\nabla f(x) \|$. Let $L_p$ be the Lipschitz constant of $\mathcal{P}_{T_{x} \mathcal{M}}$ over $x \in \mathcal{M}$ in sense that for any $x,y\in\mathcal{M}, \omega\in \mathbb{R}^n$, 
	\[\| \mathcal{P}_{T_{x} \mathcal{M}}(\omega) - \mathcal{P}_{T_{y} \mathcal{M}}(\omega)\|\leq L_p \|\omega\| \|x-y\|.\]
	For a function $F$ with Lipschitz gradient with constant $L_F$, i.e., $\|\nabla F(x) - \nabla F(y)\| \leq L_F\|x - y\|$, then we have
	\[\|{\rm grad} F(x) - \mathcal{T}_y^x {\rm grad} F(y) \| \leq ((p L_p + \zeta) G + p  L_F) \|\omega\|, \]
	where $p$ is defined in Lemma \ref{proposi:ret-lips}.
\end{lemma}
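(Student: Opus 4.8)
The plan is to compare the two tangent vectors inside the ambient space $\mathbb{R}^n$ and split the discrepancy into a ``change of base point'' part and a ``transport'' part. Since $\mathcal{T}_y^x{\rm grad}F(y)$ and ${\rm grad}F(y)$ are both vectors of $\mathbb{R}^n$, I would insert ${\rm grad}F(y)$ and apply the triangle inequality,
\[
\|{\rm grad}F(x) - \mathcal{T}_y^x{\rm grad}F(y)\| \leq \underbrace{\|{\rm grad}F(x) - {\rm grad}F(y)\|}_{\text{(A)}} + \underbrace{\|{\rm grad}F(y) - \mathcal{T}_y^x{\rm grad}F(y)\|}_{\text{(B)}},
\]
and bound the two terms separately. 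The guiding principle is that (A) captures the Lipschitz behaviour of the Riemannian gradient (through the Euclidean gradient and the projection), while (B) captures the fact that the transport $\mathcal{T}_y^x$ is a small perturbation of the identity when $\|\omega\|$ is small.

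For term (A), I would use ${\rm grad}F = \mathcal{P}_{T\mathcal{M}}\nabla F$ and insert $\mathcal{P}_{T_x\mathcal{M}}\nabla F(y)$:
\[
\|\mathcal{P}_{T_x\mathcal{M}}\nabla F(x) - \mathcal{P}_{T_y\mathcal{M}}\nabla F(y)\| \leq \|\mathcal{P}_{T_x\mathcal{M}}(\nabla F(x) - \nabla F(y))\| + \|(\mathcal{P}_{T_x\mathcal{M}} - \mathcal{P}_{T_y\mathcal{M}})\nabla F(y)\|.
\]
The first piece is at most $\|\nabla F(x) - \nabla F(y)\| \leq L_F\|x - y\|$ because orthogonal projection is nonexpansive; the second is at most $L_p\|\nabla F(y)\|\,\|x - y\| \leq L_p G\|x - y\|$ by the stated Lipschitz property of $\mathcal{P}_{T\mathcal{M}}$ together with $\|\nabla F(y)\| \leq G$. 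Finally I would invoke the first inequality of Lemma~\ref{proposi:ret-lips}, namely $\|x - y\| = \|\mathcal{R}_x(\omega) - x\| \leq p\|\omega\|$, to convert everything into a multiple of $\|\omega\|$, giving $\text{(A)} \leq (pL_F + pL_pG)\|\omega\|$.

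For term (B), the idea is that the transport operator, viewed as a linear map on $\mathbb{R}^n$, differs from the identity by $O(\zeta\|\omega\|)$. Since $\mathcal{T}_y^x$ is built from $d\mathcal{R}_x[\omega]$ and the retraction satisfies $d\mathcal{R}_x(0_x) = {\rm id}$ (Definition~\ref{def-retr}), a first-order expansion of $\omega \mapsto d\mathcal{R}_x[\omega]$ around $0_x$, controlled by the uniform second-derivative bound $\zeta = \max_{x\in\text{conv}(\mathcal{M})}\|d^2\mathcal{R}_x(\cdot)\|$, yields $\|\mathcal{T}_y^x v - v\| \leq \zeta\|\omega\|\,\|v\|$ for every tangent vector $v$. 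Applying this to $v = {\rm grad}F(y)$ and using $\|{\rm grad}F(y)\| \leq \|\nabla F(y)\| \leq G$ gives $\text{(B)} \leq \zeta G\|\omega\|$. Adding the two bounds produces $((pL_p + \zeta)G + pL_F)\|\omega\|$, as claimed.

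I expect term (B) to be the delicate step. The estimate $\|\mathcal{T}_y^x v - v\| \leq \zeta\|\omega\|\|v\|$ must be justified carefully: one has to make precise the mean-value/Taylor argument that turns the second-derivative bound $\zeta$ into a first-order deviation of $d\mathcal{R}_x[\omega]$ from the identity, and to keep track of the direction of transport (from $y$ back to $x$) so that the linear map compared to the identity is exactly the one controlled by $\zeta$. The compactness of $\mathcal{M}$ is what guarantees that the constants $p$, $\zeta$, $L_p$, $G$ are finite and uniform in the base point, so that the final bound holds with constants independent of $x$.
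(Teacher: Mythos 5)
The paper does not prove this lemma --- it is imported verbatim from \cite{deng2024oracle} --- so there is no in-paper argument to compare against. Judged on its own, your reconstruction is sound and the constants come out term-by-term exactly as in the statement: the split into (A) and (B) assigns $pL_pG + pL_F$ to the change of base point and $\zeta G$ to the deviation of the transport from the identity. One point you flag as ``delicate'' does need to be closed explicitly: the transport in the claim goes from $y$ to $x$, whereas the operator controlled by $\zeta$ is $\mathcal{T}_x^y = d\mathcal{R}_x[\omega]$, which goes the other way. Writing $u = \mathcal{T}_y^x v$ so that $v = d\mathcal{R}_x[\omega](u)$, the Taylor/mean-value bound gives $\|v - u\| \le \zeta\|\omega\|\,\|u\|$, and you then need $\|u\| = \|v\|$ --- i.e.\ the isometry of the transport that the paper assumes in Section~2 --- to convert this into $\zeta\|\omega\|\,\|v\| \le \zeta G\|\omega\|$. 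Without invoking isometry (or an explicit bound on the inverse of $d\mathcal{R}_x[\omega]$) term (B) would carry a constant $\|u\|$ rather than $\|v\|$, so make that step explicit. A last cosmetic remark: the statement defines $G$ as a bound on $\nabla f$ but you use it to bound $\nabla F(y)$; that is clearly the intended reading of the (slightly inconsistent) statement, and your use of it is the only one that makes the claimed constant come out.
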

Next we give the definition of retraction smoothness.
\begin{definition}\cite{boumal2019global}\label{def:rete-smooth}
	A function $F$ is said to retraction $L_F$-smooth with respect to retraction $\mathcal{R}$ on $\mathcal{M}$, if there exists a positive constant $L_F$ such that for all $x,y=\mathcal{R}_x (\omega)\in \mathcal{M}$ and $w\in T_{x}\mathcal{M}$, it holds that
	\begin{equation}\label{F-retr-smmoth}
		F(y)\leq F(x)+\langle {\rm grad}F(x),\omega \rangle +\frac{L_F}{2}\|\omega\|^2.
	\end{equation}
\end{definition}

\subsection{Proximal Operator and Optimality Measure}
The following lemma defines the proximal operator for a convex function and its related properties. 
\begin{lemma}\cite{bohm2021variable} \label{prox}
	Let $g$ be a convex function. The proximal operator of $g$ with $\mu>0$ is given by
	\[{\rm prox}_{\mu g}(y)=\arg\min_{z\in\mathbb{R}^m}\{g(z)+\frac{1}{2\mu}\|z-y\|^2\}.\]
	If $g$ is $L_g$-Lipschitz continuous, it holds that
	\[\|y-{\rm prox}_{\mu g}(y)\|\leq \mu L_g.\]
\end{lemma}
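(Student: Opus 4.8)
The plan is to characterize the proximal point through its first-order optimality condition and then exploit the fact that, for a convex Lipschitz function, every subgradient is bounded in norm by the Lipschitz constant. First I would set $z^\star = {\rm prox}_{\mu g}(y)$. Since $z^\star$ is the unique minimizer of the strongly convex objective $z \mapsto g(z) + \frac{1}{2\mu}\|z - y\|^2$, the first-order optimality condition reads $0 \in \partial g(z^\star) + \frac{1}{\mu}(z^\star - y)$, which rearranges to the inclusion $\frac{1}{\mu}(y - z^\star) \in \partial g(z^\star)$.

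Next I would establish the auxiliary fact that convexity together with $L_g$-Lipschitz continuity forces $\|v\| \le L_g$ for every $v \in \partial g(x)$ and every $x$. For $v \neq 0$, evaluating the subgradient inequality at the point $x + t\,v/\|v\|$ gives $g(x + t\,v/\|v\|) - g(x) \ge t\|v\|$, while Lipschitz continuity gives $g(x + t\,v/\|v\|) - g(x) \le L_g\, t$; comparing the two and dividing by $t > 0$ yields $\|v\| \le L_g$ (the case $v=0$ is trivial). Applying this bound to the subgradient identified in the first step, namely $v = \frac{1}{\mu}(y - z^\star)$, produces $\frac{1}{\mu}\|y - z^\star\| \le L_g$, i.e. $\|y - z^\star\| \le \mu L_g$, which is the claimed estimate.

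There is no serious obstacle here, but I would flag the one subtlety worth getting right: a naive energy-comparison argument, which uses optimality to write $\frac{1}{2\mu}\|z^\star - y\|^2 \le g(y) - g(z^\star) \le L_g\|y - z^\star\|$ and then cancels one factor of $\|y - z^\star\|$, only delivers the weaker bound $\|y - z^\star\| \le 2\mu L_g$. Obtaining the sharp constant $\mu L_g$ is precisely what routing the proof through the optimality \emph{inclusion} (rather than through function-value comparison) buys, so the care lies entirely in using the subgradient characterization of $z^\star$ together with the norm bound on subgradients of a Lipschitz convex function.
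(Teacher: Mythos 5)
Your proof is correct. The paper does not prove this lemma at all --- it is imported verbatim from \cite{bohm2021variable} --- so there is no in-paper argument to compare against; your job was simply to supply a valid proof, and you have. Both of your steps are sound: the optimality inclusion $\frac{1}{\mu}(y - z^\star) \in \partial g(z^\star)$ follows from the subdifferential sum rule applied to the strongly convex proximal objective, and the bound $\|v\| \le L_g$ for all $v \in \partial g(x)$ is the standard equivalence between Lipschitz continuity and bounded subgradients for convex functions (your directional evaluation at $x + t v/\|v\|$ is exactly the right way to extract it). Your closing remark is also a genuinely useful observation: the function-value comparison route does lose a factor of $2$, and routing through the subgradient inclusion is precisely what recovers the sharp constant $\mu L_g$ stated in the lemma.
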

We give the optimality measure for problem \eqref{p}. Define the Lagrangian function of \eqref{p}:
\[l(x,y,\lambda)=F(x)+g(y)-\langle \lambda, Ax-y \rangle,\]
where $\lambda$ is the Lagrange multiplier. Based on the Karush-Kuhn-Tucker (KKT) condition, we define the $\epsilon$-stationary point for problem \eqref{p}. 
\begin{definition}[$\epsilon$-Stationary]
	Given an $\epsilon>0$, we say that $x^*\in\mathcal{M}$ is an $\epsilon$-stationary point of problem \eqref{p} if there exist $y^*$ and $\lambda^*$ such that
	\[\begin{cases}
		\mathbb{E}\left[ \| \mathcal{P}_{T_{x_k}\mathcal{M}} ( -A^\top \lambda^*) + {\rm grad}F(x^*) \|\right]\leq \epsilon,\\
		\mathbb{E}\left[{\rm dist}(-\lambda^* , \partial g(y^*))\right] \leq \epsilon,\\
		\mathbb{E}\left[ \|Ax^* - y^*\| \right]\leq \epsilon.
	\end{cases}\]
	In other words, $(x^*,y^*,\lambda^*)$ is an $\epsilon$-KKT point of problem \eqref{p}.
\end{definition}
In this work, the algorithm complexity is measured by the total number of stochastic first-order oracles (SFO) to achieve $\epsilon$-approximate solution, defined as follows.
\begin{definition}
	For the problem \eqref{p}, a stochastic first-order oracle can be defined as follows: compute the Euclidean gradient $\nabla f(x,\xi)$ given an input $x$ and a sample $\xi$ from $\Xi$, the proximal operator ${\rm prox}_{g}(y)$ and the retraction operator $\mathcal{R}$.
\end{definition}

\section{Momentum-based Adaptive Riemannian Stochastic ADMM}\label{sec3}
In this section, we propose a novel Riemannian stochastic ADMM algorithm for solving problem \eqref{p}. This method builds upon the general framework of Riemannian ADMM \cite{yuan2024admm,li2025riemannian,dengadaptive} but introduces a fundamentally different gradient estimation strategy to that used in Euclidean stochastic ADMM methods \cite{zeng2025hybrid,deng2025stochastic}.

We begin with the augmented Lagrangian function for problem \eqref{p}:
$$\mathcal{L}_{\rho}(x,y,\lambda)=F(x)+g(y)-\langle \lambda, Ax-y \rangle+\frac{\rho}{2}\|Ax-y\|^2,$$
where $\rho>0$ is a penalty parameter. As a powerful operator-splitting algorithm, ADMM solves the augmented Lagrangian function by optimizing blocks of variables alternately, thereby decoupling the subproblems. Directly applying deterministic ADMM to solve problem \eqref{p} can be nontrivial and is updated as follows:
\begin{equation}\label{ADMM}
	\left\{
	\begin{aligned}
		&y_{k+1}=\arg\min_y \{\mathcal{L}_{\rho_k}(x_k,y,\lambda_k)\},\\
		&x_{k+1}=\arg\min_{x\in\mathcal{M}} \{\mathcal{L}_{\rho_k}(x,y_{k+1},\lambda_k)\},\\
		&\lambda_{k+1}=\lambda_k-\rho_k(Ax_{k+1}-y_{k+1}).
	\end{aligned}\right.
\end{equation} 
In the stochastic setting, the updates for $y$ and $\lambda$ remain similar, but the $x$-subproblem must be modified due to the presence of stochastic variables. A common strategy in Euclidean optimization is to linearize $F$ at $x_k$ using a stochastic gradient estimator $G_k$:
\[
x_{k+1}=\arg\min_{x\in\mathcal{M}} \left\{ F(x_k)+G_k^\top(x-x_k)-\langle \lambda_k, Ax-y_{k+1}\rangle+\frac{\rho}{2}\|Ax-y_{k+1}\|^2\right\}.
\]
However, due to the nonlinearity of $\mathcal{M}$, this subproblem is still computationally challenging. Instead, we update $x$ in \eqref{ADMM} via a single Riemannian stochastic gradient step:
\[x_{k+1} = \mathcal{R}_{x_k}(-\eta_k {\rm grad}_x \mathcal{L}_{\rho_k}(x_k,y_{k+1},\lambda_k)).\]
The Riemannian gradient ${\rm grad}_x \mathcal{L}_{\rho_k}(x_k,y_{k+1},\lambda_k)$ can be decomposed as: 
\[
{\rm grad}_x \mathcal{L}_{\rho_k}(x_k,y_{k+1},\lambda_k)={\rm grad} F(x_k)+ \mathcal{P}_{T_{x_k}\mathcal{M}}(\rho_k A^\top( Ax_k - y_{k+1} - \lambda_k /\rho_k)).
\]
To efficiently approximate this, we construct a variance-reduced estimator $\mathcal{G}_k$:
\begin{equation}\label{sto_Rgrad}
	\mathcal{G}_k =v_k +  \mathcal{P}_{T_{x_k}\mathcal{M}}(\rho_k A^\top( Ax_k - y_{k+1} - \lambda_k /\rho_k)),
\end{equation}
where $v_k$ is a stochastic estimator for ${\rm grad} F(x_k)$, computed recursively as follows:
\begin{align}
	v_k =&\alpha_k {\rm grad} f_{\mathcal{S}_k}(x_k)+ (1-\alpha_k)({\rm grad} f_{\mathcal{S}_k}(x_k)- \mathcal{T}^{x_k}_{x_{k-1}}({\rm grad} f_{\mathcal{S}_k}(x_{k-1})-v_{k-1}))\nonumber\\
	=&{\rm grad} f_{\mathcal{S}_k}(x_k)+ (1-\alpha_k)\mathcal{T}^{x_k}_{x_{k-1}}(v_{k-1} - {\rm grad} f_{\mathcal{S}_k}(x_{k-1})).\label{storm}
\end{align}
Here $\mathcal{S}_k = \{\xi_1,\dots,\xi_{\mathcal{S}_k}\}$ is a sampling set with cardinality $|\mathcal{S}_k|$ and the Riemannian stochastic gradient ${\rm grad} f_{\mathcal{S}_k}(x):=\frac{1}{|\mathcal{S}_k|}\sum_{\xi\in\mathcal{S}_k} {\rm grad} f(x,\xi)$. 
Motivated by \cite{cutkosky2019momentum,han2021riemannian}, the parameter $\alpha_k\in(0,1]$ controls the blending of the current stochastic gradient and past estimators information. Setting $\alpha_k=1$, $v_k$ recovers the standard Riemannian SGD estimator. This recursive design reduces variance adaptively without requiring large-batch restarts.  
The complete algorithm is summarized in Algorithm \ref{alg1}.

\begin{algorithm}[]
	\caption{Momentum-based Adaptive Riemannian Stochastic ADMM (MARS-ADMM)}\label{alg1}
	\textbf{Require}: initialization $(x_1,y_1,\lambda_1)$ and parameters $c_\rho$, $c_\eta$, $c_{\beta}$, $c_\alpha$, $\beta_1>0$.
	
	\begin{algorithmic}[1]
		\STATE Sample $\mathcal{S}_1$ from $\Xi$ and compute $v_1 = {\rm grad} f_{\mathcal{S}_1}(x_1)$.
		
		\FOR{$k = 1,\cdots,K$}
		
		\STATE  Update auxiliary variable $y_{k+1}$ via $\rho_k=c_\rho k^{1/3}$ and
		\begin{equation}\label{y-sub}
			y_{k+1} = \arg\min_{y}   \{\mathcal{L}_{\rho_k}(x_k,y,\lambda_k)\}.
		\end{equation}
		
		\STATE  Compute stochastic gradient estimater $\mathcal{G}_k$ by \eqref{sto_Rgrad} and update primal variable $x_{k+1}$ via $\eta_k=c_\eta k^{-1/3}$ and
		\begin{equation}\label{x-sub}
			x_{k+1} = \mathcal{R}_{x_k}(-\eta_k \mathcal{G}_k).
		\end{equation}
		
		\STATE Update dual stepsize $\beta_{k+1}$ via 
		\begin{equation}\label{beta}
			\beta_{k+1} = \min\left(\frac{\beta_1\|Ax_1 -y_1\|}{\|Ax_{k+1}-y_{k+1}\|(k+2)^2\ln(k+3)},\frac{c_{\beta}}{k^{1/3} \ln^2(k+2)}\right),
		\end{equation}
		
		\STATE Update dual variable $\lambda_{k+1}$ via 
		\begin{equation}\label{lambda}
			\lambda_{k+1} = \lambda_k - \beta_{k+1}(Ax_{k+1}-y_{k+1}).
		\end{equation}
		
		\STATE Sample $\mathcal{S}_{k+1}$ from $\Xi$ and compute $v_{k+1}$ by \eqref{storm} with $\alpha_{k+1}=c_\alpha k^{-2/3}$.
		
		\ENDFOR
	\end{algorithmic}
\end{algorithm}

\begin{remark}
	We conclude this section with several remarks on Algorithm \ref{alg1}.
	
	(i) The update for $y_{k+1}$ in \eqref{y-sub} corresponds to a proximal operator:
	$$y_{k+1}={\rm prox}_{\frac{g}{\rho_k}}\left(Ax_k-\frac{\lambda_k}{\rho_k}\right).$$
	
	(ii) Unlike the estimator $\mathcal{G}_k$ for the Riemannian stochastic gradient step \eqref{x-sub} in \cite{deng2024oracle}, where a recursive momentum scheme is applied directly to the entire augmented Lagrangian function $\mathcal{L}_{\rho}$, which is also adopted to update the linearized primal subproblem in \cite{shi2025momentum}, and is obtained by
	\[\mathcal{G}_k ={\rm grad} \psi_{\mathcal{S}_k}(x_k)+ (1-\alpha_k)\mathcal{T}^{x_k}_{x_{k-1}}(\mathcal{G}_{k-1} - {\rm grad} \psi_{\mathcal{S}_k}(x_{k-1})),\]
	where $\psi_{\mathcal{S}_k}(x_k)$ is a stochastic gradient estimate of ${\rm grad}_x \mathcal{L}_{\rho_k}(x_k,y_{k+1},\lambda_k)$, 
	our method treats the stochastic part $F(x)$ and the deterministic part $g(y)-\langle \lambda, Ax-y \rangle+\frac{\rho}{2}\|Ax-y\|^2$ of $\mathcal{L}_{\rho}$ separately. 
	Specifically, we approximate ${\rm grad} F(x_k)$ using the recursive momentum estimator $v_k$ from \eqref{storm}, while the deterministic term $g(_{k+1})-\langle \lambda_k, Ax_k-_{k+1} \rangle+\frac{\rho}{2}\|Ax_k-_{k+1}\|^2$ is computed exactly as $\mathcal{P}_{T_{x_k}\mathcal{M}}(\rho_k A^\top( Ax_k - y_{k+1} - \lambda_k /\rho_k))$. 
	Combining these two components yields the gradient estimator $\mathcal{G}_k$ in \eqref{sto_Rgrad}. This separation allows for more efficient variance reduction, which is tailored to the stochastic part of the problem.
	
	(iii) Convergence analysis of nonconvex ADMM typically relies on bounding the dual variable using the smoothness of at least one of the objective components. If $\nabla F(x)$ is accessible, the optimality condition of the $x$-subproblem in \eqref{ADMM} yields
	\[
	\mathcal{P}_{T_{x_{k+1}}\mathcal{M}}(\nabla F(x_{k+1})-A^\top \lambda_k+\rho_k A^\top (Ax_{k+1}-y_{k+1}))
	=\mathcal{P}_{T_{x_{k+1}}\mathcal{M}}(\nabla F(x_{k+1})-A^\top \lambda_{k+1})=0.
	\]
	In the Euclidean case, this expression yields $A^\top \lambda_{k+1}=\nabla F(x_{k+1})$, meaning that $\|\lambda_{k+1}-\lambda_{k}\|$ can be bounded by $\|x_{k+1}-x_{k}\|$ via the Lipschitz continuity of $\nabla F(x)$ and the full-rank assumption on the matrix $A$. However, in the manifold case, the varying tangent space $T_x\mathcal{M}$ invalidates this argument. Algorithm \ref{alg1} shares a similar bounding technique with \cite{deng2024oracle,dengadaptive} that carefully tunes the penalty parameter $\rho_k$ and dual stepsize $\beta_k$ to overcome this issue. The update rule in \eqref{beta}, where the first term is designed to bound the dual variable and the second term is designed to guarantee the convergence, balances dual update stabilization with convergence guarantees, enabling a near-optimal complexity result.
	
\end{remark}

\section{Convergence Analysis}\label{sec4}
This section analyses the convergence properties of Algorithm \ref{alg1}. First, the necessary assumptions are introduced below. 
\begin{assumption}\label{ass1}
	
	(i) The manifold $\mathcal{M}$ is compact and complete, embedded in Euclidean space $\mathbb{R}^n$ with diameter $\mathcal{D}$.
	
	(ii) The gradient function $\nabla F$ is $L_{\nabla F}$-Lipschitz continuous. The function $g$ is convex and $L_g$-Lipschitz continuous.
	
	(iii)  $F(x)$ and $g(y)$ are both lower bounded, and let $F^*=\inf_x F(x)>-\infty$ and  $g^*=\inf_y g(y)>-\infty$.
\end{assumption}
Assumption \ref{ass1} (i) implies that $\mathcal{M}$ is a bounded and closed set, i.e., there exists a finite constant $\mathcal{D}$ such that $\mathcal{D} = \max_{x,y\in\mathcal{M}} \|x-y\|$. It covers common manifolds such as the sphere, Stiefel, and Grassmann manifold. Assumption \ref{ass1} (ii) implies that for any $x, y\in \mathcal{M}$, it holds that
\[\|\nabla F(x)-\nabla F(y)\|\leq L_{\nabla F} \|x-y\|.\]
The following is a standard assumption regarding the stochastic gradient in stochastic optimization.
\begin{assumption}\label{ass2}
	The stochastic gradient ${\rm grad} f(x,\xi)$ is unbiased and has bounded variance. That is, for all $x \in \mathcal{M}$ and some $\sigma>0$, it satisfies that
	\[
	\mathbb{E}_{\xi}[{\rm grad}f(x,\xi)]={\rm grad} F(x),~
	\mathbb{E}_{\xi}\|{\rm grad}f(x,\xi)-{\rm grad} F(x)\|^2\leq \sigma^2.
	\]
\end{assumption}
To achieve faster convergence, generalizing from the Euclidean case, we assume the mean-squared retraction is Lipschitz continuous, which is the minimal additional requirement to achieve the optimal complexity. \cite{han2021riemannian,han2022improved,deng2024oracle}.
\begin{assumption}\label{ass3}
	The objective function $f$ is mean-squared retraction $\tilde{L}$ Lipschitz. That is, there exists a positive constant $\tilde{L}$ such that for all $x,y=\mathcal{R}_x (\omega)\in \mathcal{M}$,
	\[\mathbb{E}_{\xi}\|{\rm grad}f(x,\xi)-\mathcal{T}_y^x {\rm grad} f(y,\xi)\|^2\leq \tilde{L}^2\|\omega\|^2\]
	holds with vector transport $\mathcal{T}_y^x$ along the retraction curve $c(t):=\mathcal{R}_x(t\omega)$.
\end{assumption}

Using Lemma \ref{prox} and updating the dual stepsizes $\beta_k$ adaptively, the following lemma provides an effective bound on the dual difference. In contrast to previous Riemannian ADMM methods \cite{yuan2024admm,li2025riemannian}, this approach helps our method avoid the use of any smoothing techniques to address the nonsmooth nature of the term $g$.
\begin{lemma}\label{lem-bdp}
	Suppose that Assumption \ref{ass1} holds and consider Algorithm \ref{alg1}. Then we can bound dual by primal as
	\[
	\|\lambda_{k+1}-\lambda_k\|\leq \frac{\beta_{k+1} }{\rho_k}(L_g+\lambda_{\max})+\beta_{k+1}\|A\|\|x_{k+1} -x_k\|,
	\]
	where $\lambda_{\max}=\|\lambda_1\|+\frac{\pi^2}{6}\beta_1\|Ax_1 -y_1\|$ and $\beta_1>0$. 
\end{lemma}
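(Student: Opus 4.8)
The plan is to reduce everything to a single quantity, the feasibility residual $\|Ax_{k+1}-y_{k+1}\|$, and then control it. From the dual update \eqref{lambda} one has immediately $\|\lambda_{k+1}-\lambda_k\| = \beta_{k+1}\|Ax_{k+1}-y_{k+1}\|$ since $\beta_{k+1}>0$, so the entire statement follows once the residual is bounded by $\|A\|\,\|x_{k+1}-x_k\|$ plus a $1/\rho_k$-decaying term. To obtain such a bound I would first exploit the proximal form of the $y$-update: as noted after Algorithm \ref{alg1}, $y_{k+1}={\rm prox}_{g/\rho_k}(Ax_k-\lambda_k/\rho_k)$, and therefore Lemma \ref{prox} with $\mu=1/\rho_k$ together with the $L_g$-Lipschitz continuity of $g$ gives
\[
\left\| Ax_k-\frac{\lambda_k}{\rho_k}-y_{k+1}\right\|\leq \frac{L_g}{\rho_k}.
\]

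Next I would insert the algebraic splitting
\[
Ax_{k+1}-y_{k+1}=A(x_{k+1}-x_k)+\left(Ax_k-\frac{\lambda_k}{\rho_k}-y_{k+1}\right)+\frac{\lambda_k}{\rho_k},
\]
and apply the triangle inequality together with $\|A(x_{k+1}-x_k)\|\leq \|A\|\,\|x_{k+1}-x_k\|$ to arrive at
\[
\|Ax_{k+1}-y_{k+1}\|\leq \|A\|\,\|x_{k+1}-x_k\|+\frac{L_g+\|\lambda_k\|}{\rho_k}.
\]
Multiplying through by $\beta_{k+1}$ reproduces exactly the claimed inequality, provided the uniform bound $\|\lambda_k\|\leq \lambda_{\max}$ is available.

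The crux, and the step I expect to be the main obstacle, is establishing this uniform dual bound $\|\lambda_k\|\leq\lambda_{\max}$ for every $k$. I would unroll the recursion \eqref{lambda} into $\lambda_k=\lambda_1-\sum_{j=1}^{k-1}\beta_{j+1}(Ax_{j+1}-y_{j+1})$, so that $\|\lambda_k\|\leq \|\lambda_1\|+\sum_{j=1}^{k-1}\beta_{j+1}\|Ax_{j+1}-y_{j+1}\|$. Here the first argument of the $\min$ in the stepsize rule \eqref{beta} is precisely the mechanism that makes the sum finite: it forces
\[
\beta_{j+1}\|Ax_{j+1}-y_{j+1}\|\leq \frac{\beta_1\|Ax_1-y_1\|}{(j+2)^2\ln(j+3)},
\]
the degenerate case $Ax_{j+1}=y_{j+1}$ being trivially zero. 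Using $\ln(j+3)\geq 1$ for $j\geq 1$ and the convergence of the $p$-series, $\sum_{j=1}^{\infty}(j+2)^{-2}=\sum_{m=3}^{\infty}m^{-2}<\sum_{m=1}^{\infty}m^{-2}=\pi^2/6$, the tail collapses into the constant $\tfrac{\pi^2}{6}\beta_1\|Ax_1-y_1\|$, yielding $\|\lambda_k\|\leq \|\lambda_1\|+\tfrac{\pi^2}{6}\beta_1\|Ax_1-y_1\|=\lambda_{\max}$.

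The conceptual difficulty worth emphasizing is why this route is necessary at all. In the Euclidean setting $\|\lambda_{k+1}-\lambda_k\|$ is classically controlled through the $x$-optimality condition and the smoothness of $F$ (as discussed in the remark following Algorithm \ref{alg1}), but on the manifold the varying tangent space $T_x\mathcal{M}$ breaks that argument. The uniform dual bound therefore has to be manufactured purely from the geometrically decaying stepsize $\beta_{k+1}$, which is exactly what the summable first branch of \eqref{beta} supplies. Substituting $\|\lambda_k\|\leq\lambda_{\max}$ into the residual bound and multiplying by $\beta_{k+1}$ then completes the proof.
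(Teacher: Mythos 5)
Your proposal is correct and follows essentially the same route as the paper's proof: bound $\|\lambda_k\|$ uniformly by $\lambda_{\max}$ using the summable first branch of the $\beta$-update, then split $Ax_{k+1}-y_{k+1}$ into $A(x_{k+1}-x_k)$, the prox residual $Ax_k-\lambda_k/\rho_k-y_{k+1}$ (controlled by Lemma \ref{prox}), and $\lambda_k/\rho_k$. The only difference is cosmetic ordering, and your handling of the $i=1$ term and the degenerate case $Ax_{j+1}=y_{j+1}$ is in fact slightly more careful than the paper's.
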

\begin{proof}
	We first show that $\lambda_k$ is bounded.
	By \eqref{lambda}, we have
	\[\begin{aligned}
		\|\lambda_{k+1}\|  &\leq \|\lambda_k\|+\beta_{k+1}\|Ax_{k+1} -y_{k+1} \| \\
		& \leq \|\lambda_{k-1}\|+\beta_k\|Ax_k -y_k\|+\beta_{k+1}\|Ax_{k+1} -y_{k+1}\|\\
		& \leq \|\lambda_1\|+\sum_{i=1}^{k+1}\beta_i\|Ax_i -y_i\| \\ 
		& \leq \|\lambda_1\|+\sum_{i=1}^{\infty}\beta_i\|Ax_i -y_i\|\\
		& \overset{\eqref{beta}}{\leq} \|\lambda_1\|+\beta_1\|Ax_1 -y_1 \| \sum_{i=1}^{\infty} \frac{1}{(i+1)^2\ln(i+2)} \leq \lambda_{\max},
	\end{aligned}\]
	where the last inequality holds by $\ln(i+2)>1$ for $i\geq 1$ and $\sum_{i=1}^{\infty} \frac{1}{(i+1)^2}=\frac{\pi^2}{6}$. 
	Then the dual difference can be bounded by the primal difference,
	\[\begin{aligned}
		\|\lambda_{k+1}-\lambda_k\|&=\beta_{k+1}\|Ax_{k+1}-y_{k+1} \|\\
		& \leq \beta_{k+1}\left\|Ax_k-y_{k+1}-\frac{\lambda_k}{\rho_k}\right\|+\beta_{k+1}\|A\|\|x_{k+1} -x_k\|+\frac{\beta_{k+1} }{\rho_k}\|\lambda_k\|\\
		& \overset{\eqref{y-sub}}{\leq} \beta_{k+1}\left\|Ax_k-\frac{\lambda_k}{\rho_k}-{\rm prox}_{\frac{g}{\rho_k}}\left(Ax_k-\frac{\lambda_k}{\rho_k}\right)\right\|+\beta_{k+1}\|A\|\|x_{k+1} -x_k\|+\frac{\beta_{k+1} }{\rho_k}\|\lambda_k\|\\
		&\leq \frac{\beta_{k+1} }{\rho_k}(L_g+\lambda_{\max})+\beta_{k+1}\|A\|\|x_{k+1} -x_k\|,
	\end{aligned}\]
	where the last inequality holds by Lemma \ref{prox}. The proof is completed.
\end{proof}

Then we demonstrate that $\nabla_x \mathcal{L}_{\rho_k}(x,y_{k+1},\lambda_k)$ is retraction smooth. 
\begin{lemma}\label{Lrho-retr-smooth} 
	Suppose that Assumption \ref{ass1} holds and let $c_{\rho} \geq 1$. Then the augmented Lagrangian function $\mathcal{L}_{\rho_k}(x,y_{k+1},\lambda_k)$ is retraction smooth with respect to $x\in\mathcal{M}$, i.e., for all $\eta\in T_{x}\mathcal{M}$, 
	\[
	\mathcal{L}_{\rho_k}(\mathcal{R}_{x}(\eta),y_{k+1},\lambda_k) \leq \mathcal{L}_{\rho_k}(x,y_{k+1},\lambda_k) + \left<\eta,{\rm grad}_x \mathcal{L}_{\rho_k}(x,y_{k+1},\lambda_k)\right> + \frac{M \rho_k}{2}\|\eta\|^2,
	\]
	where $ M=  (L_{\nabla_F}+\|A\|^2)p^2  +2 q \Theta $, $\Theta=\mathcal{B}+ \|A\|^2 \mathcal{D} +\|A\|(L_g + \lambda_{\max} +  2\|A\|\mathcal{D})+\|A\|\lambda_{\max}$, $p$ and $q$ are defined in Lemma \ref{proposi:ret-lips}, and $\mathcal{B}$ is constant.
\end{lemma}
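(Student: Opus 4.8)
The plan is to reduce the claim to the standard Euclidean descent lemma applied to the smooth ambient part of $\mathcal{L}_{\rho_k}$ and then transfer the resulting inequality to $\mathcal{M}$ using the retraction estimates of Lemma~\ref{proposi:ret-lips}. Fix $x\in\mathcal{M}$ and $\eta\in T_x\mathcal{M}$, and abbreviate $\phi(\cdot):=\mathcal{L}_{\rho_k}(\cdot,y_{k+1},\lambda_k)$ regarded as a function on $\mathbb{R}^n$ with $y_{k+1},\lambda_k$ frozen; since $g(y_{k+1})$ is then constant, $\phi$ is smooth with Euclidean gradient $\nabla\phi(x)=\nabla F(x)-A^\top\lambda_k+\rho_k A^\top(Ax-y_{k+1})$. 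First I would record that $\nabla\phi$ is Euclidean-Lipschitz on $\mathrm{conv}(\mathcal{M})$ with constant $L_{\nabla F}+\rho_k\|A\|^2$ (the affine penalty part contributes $\rho_k\|A^\top A\|=\rho_k\|A\|^2$), which, using $\rho_k=c_\rho k^{1/3}\ge c_\rho\ge1$, is at most $\rho_k(L_{\nabla F}+\|A\|^2)$. The hypothesis $c_\rho\ge1$ enters exactly here, to factor $\rho_k$ out of both the smoothness constant and the gradient bound below. (Working directly rather than invoking Lemma~\ref{lem:Lpb} keeps this $\rho_k$-dependence explicit, which its generic modulus $L_f$ would obscure.)

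Applying the Euclidean descent inequality to the segment joining $x$ and $\mathcal{R}_x(\eta)$ gives
\[
\phi(\mathcal{R}_x(\eta))\le \phi(x)+\langle\nabla\phi(x),\mathcal{R}_x(\eta)-x\rangle+\tfrac{\rho_k(L_{\nabla F}+\|A\|^2)}{2}\|\mathcal{R}_x(\eta)-x\|^2.
\]
I would then convert the linear term to the intrinsic gradient by splitting $\langle\nabla\phi(x),\mathcal{R}_x(\eta)-x\rangle=\langle\nabla\phi(x),\eta\rangle+\langle\nabla\phi(x),\mathcal{R}_x(\eta)-x-\eta\rangle$. Since $\eta\in T_x\mathcal{M}$ and ${\rm grad}_x\phi(x)=\mathcal{P}_{T_x\mathcal{M}}(\nabla\phi(x))$, the first piece equals $\langle{\rm grad}_x\phi(x),\eta\rangle$; the second is bounded by $\|\nabla\phi(x)\|\,\|\mathcal{R}_x(\eta)-x-\eta\|\le q\|\nabla\phi(x)\|\|\eta\|^2$ via the second estimate of Lemma~\ref{proposi:ret-lips}, while the quadratic term is controlled by $\|\mathcal{R}_x(\eta)-x\|^2\le p^2\|\eta\|^2$ via the first.

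The crux is a uniform-in-$x$ bound $\|\nabla\phi(x)\|\le\rho_k\Theta$. I would estimate the three summands of $\nabla\phi(x)$ separately: $\|\nabla F(x)\|\le\mathcal{B}:=\max_{x\in\mathcal{M}}\|\nabla F(x)\|$ (finite by compactness), $\|A^\top\lambda_k\|\le\|A\|\lambda_{\max}$ by the uniform dual bound of Lemma~\ref{lem-bdp}, and the penalty gradient $\rho_k\|A\|\,\|Ax-y_{k+1}\|$ by first controlling $\|Ax-y_{k+1}\|$. For the latter, the prox identity $y_{k+1}={\rm prox}_{g/\rho_k}(Ax_k-\lambda_k/\rho_k)$ together with Lemma~\ref{prox} yields $\|Ax_k-y_{k+1}\|\le(L_g+\lambda_{\max})/\rho_k$, and the triangle inequality with $\|A(x-x_k)\|\le\|A\|\mathcal{D}$ gives $\|Ax-y_{k+1}\|\le\|A\|\mathcal{D}+(L_g+\lambda_{\max})/\rho_k$; multiplying by $\rho_k\|A\|$ produces a term that is $O(\rho_k)$ rather than higher order. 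Collecting the pieces and absorbing the $\rho_k$-free summands with $\rho_k\ge1$ yields $\|\nabla\phi(x)\|\le\rho_k\Theta$ with $\Theta$ as in the statement (the precise grouping of the $\|A\|,\mathcal{D},L_g,\lambda_{\max}$ terms being routine). Substituting everything into the descent inequality gives
\[
\phi(\mathcal{R}_x(\eta))\le\phi(x)+\langle{\rm grad}_x\phi(x),\eta\rangle+\Bigl(\tfrac{(L_{\nabla F}+\|A\|^2)p^2}{2}+q\Theta\Bigr)\rho_k\|\eta\|^2,
\]
which is the claim with $M=(L_{\nabla F}+\|A\|^2)p^2+2q\Theta$.

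I expect the main obstacle to be this gradient-norm step. A priori $\|\nabla\phi(x)\|$ contains the penalty gradient $\rho_k A^\top(Ax-y_{k+1})$ and the dual term $A^\top\lambda_k$, either of which could inflate the smoothness modulus beyond $O(\rho_k)$; keeping it linear in $\rho_k$ requires the two nontrivial facts $\|Ax-y_{k+1}\|=O(1)$ and $\|\lambda_k\|=O(1)$ uniformly in $k$. Neither is automatic—they are precisely what the proximal structure of the $y$-update and the adaptive dual-stepsize design buy us (through Lemmas~\ref{prox} and~\ref{lem-bdp})—and they are exactly what makes the retraction-smoothness modulus $M\rho_k$ with $M$ independent of $k$.
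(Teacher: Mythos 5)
Your proposal is correct and follows essentially the same route as the paper: the same Euclidean Lipschitz constant $L_{\nabla F}+\rho_k\|A\|^2$, the same uniform bound $\|\nabla_x\mathcal{L}_{\rho_k}\|\le\rho_k\Theta$ obtained from $\|\nabla F\|\le\mathcal{B}$, the dual bound of Lemma~\ref{lem-bdp}, and the proximal control of $\|Ax_k-y_{k+1}\|$, followed by the two retraction estimates of Lemma~\ref{proposi:ret-lips}. The only difference is that you explicitly unpack the descent-lemma-plus-retraction argument that the paper delegates to Lemma~\ref{lem:Lpb}, which is in fact what is needed to see that the modulus is exactly $M\rho_k$ with $M=(L_{\nabla F}+\|A\|^2)p^2+2q\Theta$.
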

\begin{proof}
	The Euclidean gradient of $\mathcal{L}_\rho$ with respect to $x$ has the following form: 
	\[\nabla_x \mathcal{L}_{\rho_k}(x,y_{k+1},\lambda_k) = \nabla F(x)+ \rho_k A^\top( Ax - y_{k+1} - \lambda_k /\rho_k).\]
	One can easily shows that $\nabla \mathcal{L}_{\rho_k}(x,y_{k+1},\lambda_k)$ is Lipschitz continuous with constant $L_{\nabla_F}+\rho_k \|A\|^2 $. Since $\nabla F(x)$ is continuous on the compact manifold $\mathcal{M}$, there exists $\mathcal{B}>0$ such that $\|\nabla F(x)\|\leq \mathcal{B}$ for all $x\in\mathcal{M}$. By Lemma \ref{lem-bdp}, we have 
	\begin{equation}\label{Axk-yk}
		\|Ax_k - y_k\|  \overset{\eqref{lambda}}{=} \frac{1}{\beta_{k}}\| \lambda_k - \lambda_{k-1} \| \leq \frac{L_g + \lambda_{\max}}{\rho_{k-1}} +  \|A\|\| x_k-x_{k-1}\|,
	\end{equation}
	and $\|y_{k+1}\|$ can be bounded by 
	\[\begin{aligned}
		\|y_{k+1}\|&\leq\|y_{k+1}-Ax_{k+1}\|+\|Ax_{k+1}\|\\
		&\overset{\eqref{Axk-yk}}{\leq} \frac{L_g + \lambda_{\max}}{\rho_k} +  \|A\|\| x_{k+1}-x_k\| +\|A\|\|x_{k+1}\|\\
		&\leq L_g + \lambda_{\max} +  2\|A\|\mathcal{D},
	\end{aligned}\]
	where the last inequality holds by Assumption \ref{ass1} (i) and $c_\rho\geq 1$, $\rho_k=c_\rho k^{1/3}\geq 1$ for $k\geq 1$.
	Thus we have
	\[\begin{aligned}
		\|\nabla \mathcal{L}_{\rho_k}(x,y_{k+1},\lambda_k)\|&\leq \|\nabla F(x)\|+ \rho_k \|A\|^2\|x\| +\rho_k \|A\|\|y_{k+1}\| +\|A\|\|\lambda_k\|\\
		&\leq \mathcal{B}+ \rho_k \|A\|^2 \mathcal{D} +\rho_k\|A\|(L_g + \lambda_{\max} +  2\|A\|\mathcal{D})+\|A\|\lambda_{\max}
		\leq \rho_k \Theta,
	\end{aligned}\]
	where $\|\lambda_k\|$ is bounded by Lemma \ref{lem-bdp} and $\|x\|$ is bounded by Assumption \ref{ass1} (i). 
	Then the proof is completed by Lemmas \ref{proposi:ret-lips} and \ref{lem:Lpb}. 
\end{proof}

Next, we give the following lemma regarding the decrease of the augmented Lagrangian function $\mathcal{L}_\rho$.
\begin{lemma}\label{lem:ALFdec}
	Suppose that Assumption \ref{ass1} holds. Consider the sequence $\{w_k:=(x_k,y_k,\lambda_k)\}$ generated by Algorithm \ref{alg1}. Define $\varepsilon_k=v_k-{\rm grad} F(x_k)$ and let $c_\rho\geq 1$, $\nu_k=c_\nu k^{1/3}$, $0<c_\nu\leq c_\rho$ and $0<c_\eta\leq \frac{1}{(1+M) c_\rho}$, where $M$ is defined in Lemma \ref{Lrho-retr-smooth}. Then we have
	\begin{equation}\label{decALF}
		\mathcal{L}_{\rho_{k+1}}(w_{k+1})-\mathcal{L}_{\rho_k}(w_k) \leq-\frac{\eta_k}{2}\|\mathcal{G}_k\|^2+\frac{1}{2\nu_k} \|\varepsilon_k\|^2
		+\left(\frac{c_\rho^3}{6\beta_{k+1}^2\rho_k^2}+\frac{1}{\beta_{k+1}}\right)\|\lambda_{k+1}-\lambda_k\|^2.
	\end{equation}
\end{lemma}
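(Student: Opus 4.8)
The plan is to telescope the one-step change $\mathcal{L}_{\rho_{k+1}}(w_{k+1})-\mathcal{L}_{\rho_k}(w_k)$ through intermediate configurations so that each block update is isolated. Concretely, I would write the difference as the sum of four pieces: the $y$-step $\mathcal{L}_{\rho_k}(x_k,y_{k+1},\lambda_k)-\mathcal{L}_{\rho_k}(x_k,y_k,\lambda_k)$, the $x$-step $\mathcal{L}_{\rho_k}(x_{k+1},y_{k+1},\lambda_k)-\mathcal{L}_{\rho_k}(x_k,y_{k+1},\lambda_k)$, the $\lambda$-step $\mathcal{L}_{\rho_k}(x_{k+1},y_{k+1},\lambda_{k+1})-\mathcal{L}_{\rho_k}(x_{k+1},y_{k+1},\lambda_k)$, and the $\rho$-step $\mathcal{L}_{\rho_{k+1}}(x_{k+1},y_{k+1},\lambda_{k+1})-\mathcal{L}_{\rho_k}(x_{k+1},y_{k+1},\lambda_{k+1})$. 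Three of the four are routine. The $y$-step is nonpositive since $y_{k+1}$ minimizes the convex map $y\mapsto\mathcal{L}_{\rho_k}(x_k,y,\lambda_k)$ by \eqref{y-sub}. The $\lambda$-step depends on $\lambda$ only through the coupling $-\langle\lambda,Ax-y\rangle$, so it equals $-\langle\lambda_{k+1}-\lambda_k,Ax_{k+1}-y_{k+1}\rangle$; substituting \eqref{lambda} in the form $Ax_{k+1}-y_{k+1}=-\beta_{k+1}^{-1}(\lambda_{k+1}-\lambda_k)$ turns this into $\beta_{k+1}^{-1}\|\lambda_{k+1}-\lambda_k\|^2$. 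The $\rho$-step depends on $\rho$ only through $\tfrac{\rho}{2}\|Ax-y\|^2$, so it equals $\tfrac12(\rho_{k+1}-\rho_k)\|Ax_{k+1}-y_{k+1}\|^2$; here I would use concavity of $t\mapsto t^{1/3}$ to get $\rho_{k+1}-\rho_k=c_\rho((k+1)^{1/3}-k^{1/3})\leq\tfrac{c_\rho}{3}k^{-2/3}$, replace $\|Ax_{k+1}-y_{k+1}\|^2$ by $\beta_{k+1}^{-2}\|\lambda_{k+1}-\lambda_k\|^2$ via \eqref{lambda}, and use $\rho_k^2=c_\rho^2k^{2/3}$, which produces exactly the coefficient $\tfrac{c_\rho^3}{6\beta_{k+1}^2\rho_k^2}$.

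The crux is the $x$-step. I would apply the retraction-smoothness of $\mathcal{L}_{\rho_k}(\cdot,y_{k+1},\lambda_k)$ from Lemma \ref{Lrho-retr-smooth} at $\eta=-\eta_k\mathcal{G}_k$, using $x_{k+1}=\mathcal{R}_{x_k}(-\eta_k\mathcal{G}_k)$ from \eqref{x-sub}, to obtain
\[
\mathcal{L}_{\rho_k}(x_{k+1},y_{k+1},\lambda_k)-\mathcal{L}_{\rho_k}(x_k,y_{k+1},\lambda_k) \leq -\eta_k\langle\mathcal{G}_k,{\rm grad}_x\mathcal{L}_{\rho_k}(x_k,y_{k+1},\lambda_k)\rangle+\frac{M\rho_k}{2}\eta_k^2\|\mathcal{G}_k\|^2.
\]
The key observation is that, because only the smooth part $F$ is estimated while the coupling and penalty terms are computed exactly, the estimator error is precisely $\mathcal{G}_k-{\rm grad}_x\mathcal{L}_{\rho_k}(x_k,y_{k+1},\lambda_k)=v_k-{\rm grad}F(x_k)=\varepsilon_k$ by \eqref{sto_Rgrad}. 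Thus the inner product equals $\|\mathcal{G}_k\|^2-\langle\mathcal{G}_k,\varepsilon_k\rangle$, and a Young inequality calibrated by $\nu_k$, namely $\eta_k\langle\mathcal{G}_k,\varepsilon_k\rangle\leq\tfrac{\eta_k^2\nu_k}{2}\|\mathcal{G}_k\|^2+\tfrac{1}{2\nu_k}\|\varepsilon_k\|^2$, deposits the error term exactly as the desired $\tfrac{1}{2\nu_k}\|\varepsilon_k\|^2$ and leaves the coefficient $\bigl(-\eta_k+\tfrac{\eta_k^2\nu_k}{2}+\tfrac{M\rho_k\eta_k^2}{2}\bigr)$ in front of $\|\mathcal{G}_k\|^2$.

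The main technical point, where the parameter schedule does its work, is to verify that this coefficient is at most $-\tfrac{\eta_k}{2}$, i.e. that $\eta_k(\nu_k+M\rho_k)\leq1$. With $\eta_k=c_\eta k^{-1/3}$, $\nu_k=c_\nu k^{1/3}$ and $\rho_k=c_\rho k^{1/3}$ the powers of $k$ cancel, reducing this to the constant inequality $c_\eta(c_\nu+Mc_\rho)\leq1$, which follows from $c_\nu\leq c_\rho$ and $c_\eta\leq\frac{1}{(1+M)c_\rho}$ since then $c_\eta(c_\nu+Mc_\rho)\leq c_\eta c_\rho(1+M)\leq1$. Assembling the nonpositive $y$-step, the bounded $x$-step $\leq-\tfrac{\eta_k}{2}\|\mathcal{G}_k\|^2+\tfrac{1}{2\nu_k}\|\varepsilon_k\|^2$, and the $\lambda$- and $\rho$-steps combining into the stated $\|\lambda_{k+1}-\lambda_k\|^2$ coefficient yields \eqref{decALF}. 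I do not expect a serious obstacle beyond bookkeeping of constants; the only real subtlety is choosing the Young split by $\nu_k$ precisely so that the residual $\|\mathcal{G}_k\|^2$ term is absorbable under exactly the prescribed stepsize constraints.
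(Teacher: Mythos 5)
Your proposal is correct and follows essentially the same route as the paper's proof: the same four-part telescoping decomposition over the $y$-, $x$-, $\lambda$-, and $\rho$-updates, the same use of retraction smoothness plus the identity $\mathcal{G}_k-{\rm grad}_x\mathcal{L}_{\rho_k}(x_k,y_{k+1},\lambda_k)=\varepsilon_k$, the same Young split calibrated by $\nu_k$, and the same concavity bound $\rho_{k+1}-\rho_k\leq\frac{c_\rho}{3}k^{-2/3}=\frac{c_\rho^3}{3\rho_k^2}$. The parameter verification $c_\eta(c_\nu+Mc_\rho)\leq c_\eta c_\rho(1+M)\leq 1$ is also exactly the paper's argument.
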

\begin{proof}
	By \eqref{y-sub}, we have
	\begin{equation}\label{ydec}
		\mathcal{L}_{\rho_k}(x_k,y_{k+1},\lambda_k)\leq \mathcal{L}_{\rho_k}(x_k,y_k,\lambda_k).
	\end{equation}
	It follows from Lemma \ref{Lrho-retr-smooth} that
	\begin{align}
		&\mathcal{L}_{\rho_k}(x_{k+1},y_{k+1},\lambda_k)-\mathcal{L}_{\rho_k}(x_k,y_{k+1},\lambda_k)\nonumber\\
		\leq &\langle {\rm grad}_x \mathcal{L}_{\rho_k}(x_k,y_{k+1},\lambda_k), - \eta_k\mathcal{G}_k\rangle+\frac{M \rho_k\eta_k^2}{2}\|\mathcal{G}_k\|^2\nonumber\\
		=& \langle {\rm grad}F(x_k)-v_k+v_k+ \mathcal{P}_{T_{x_k}\mathcal{M}}\left(\rho_k A^\top( Ax_k - y_{k+1} - \lambda_k /\rho_k)\right),- \eta_k\mathcal{G}_k\rangle+\frac{M \rho_k\eta_k^2}{2}\|\mathcal{G}_k\|^2\nonumber\\
		\overset{\eqref{sto_Rgrad}}{=}& \langle {\rm grad}F(x_k)-v_k,- \eta_k\mathcal{G}_k\rangle- \eta_k\|\mathcal{G}_k\|^2+\frac{M \rho_k\eta_k^2}{2}\|\mathcal{G}_k\|^2\nonumber\\
		\leq &\frac{1}{2\nu_k}\|\varepsilon_k\|^2-\eta_k\|\mathcal{G}_k\|^2+\frac{(\nu_k+M \rho_k)\eta_k^2}{2}\|\mathcal{G}_k\|^2\nonumber\\
		\leq& \frac{1}{2\nu_k}\|\varepsilon_k\|^2-\frac{\eta_k}{2}\|\mathcal{G}_k\|^2,\label{xdec}
	\end{align}
	where the last inequality holds by $\nu_k=c_\nu k^{1/3}$, $0<c_\nu\leq c_\rho$ and $c_\eta\leq \frac{1}{(1+M) c_\rho}$. By the definition of $\mathcal{L}_{\rho_k}$ and \eqref{lambda}, we have
	\begin{equation}\label{lambdadec}
		\mathcal{L}_{\rho_k}(x_{k+1},y_{k+1},\lambda_{k+1})-\mathcal{L}_{\rho_k}(x_{k+1},y_{k+1},\lambda_k)=\frac{1}{\beta_{k+1}}\|\lambda_{k+1}-\lambda_k\|^2,
	\end{equation}
	and
	\begin{align}
		&\mathcal{L}_{\rho_{k+1}}(x_{k+1},y_{k+1},\lambda_{k+1})-\mathcal{L}_{\rho_k}(x_{k+1},y_{k+1},\lambda_{k+1})\nonumber\\
		=&\frac{\rho_{k+1}-\rho_k}{2}\|Ax_{k+1}-y_{k+1}\|^2\overset{\eqref{lambda}}{=}\frac{\rho_{k+1}-\rho_k}{2\beta_{k+1}^2}\|\lambda_{k+1}-\lambda_k\|^2\leq\frac{c_\rho^3}{6\beta_{k+1}^2\rho_k^2}\|\lambda_{k+1}-\lambda_k\|^2,\label{rhodec}
	\end{align}
	where the last inequality holds by $\rho_k=c_\rho k^{1/3}$, and
	\[
	\rho_{k+1}-\rho_k=c_\rho ((x+1)^{1/3}-x^{1/3})\leq \frac{c_\rho}{3}k^{-2/3}=\frac{c_\rho^3}{3\rho_k^2}.
	\]
	Combining \eqref{ydec}-\eqref{rhodec}, we get the inequality \eqref{decALF}. The proof is completed.
\end{proof}

Define an increasing sigma-algebra $\mathcal{F}_k=\{\mathcal{S}_1,\dots,\mathcal{S}_{k-1}\}$. Hence by updates in Algorithm \ref{alg1}, $x_k$ and $v_{k-1}$ are measurable in $\mathcal{F}_k$. Then we bound the estimation error $\|\varepsilon_k\|$ in \eqref{decALF}.
\begin{lemma}\label{lem-eeb}
	Suppose that Assumptions \ref{ass2} and \ref{ass3} hold, and consider Algorithm \ref{alg1}. Then we can bound the expected estimation error as
	\[
	\mathbb{E}[\|\varepsilon_k\|^2]\leq (1-\alpha_k)^2\mathbb{E}[\|\varepsilon_{k-1}\|^2]+2\alpha_k^2\sigma^2+2(1-\alpha_k)^2\eta_{k-1}^2\tilde{L}^2\mathbb{E}[\|\mathcal{G}_{k-1}\|^2].
	\]
\end{lemma}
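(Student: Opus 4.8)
The plan is to unfold the recursive estimator \eqref{storm} and split $\varepsilon_k$ into a part that is measurable with respect to $\mathcal{F}_k$ and a mean-zero increment driven by the fresh sample $\mathcal{S}_k$. First I would substitute $v_{k-1} = \varepsilon_{k-1} + {\rm grad} F(x_{k-1})$ into \eqref{storm} and subtract ${\rm grad} F(x_k)$, obtaining the decomposition $\varepsilon_k = A_k + B_k$, where $A_k = (1-\alpha_k)\mathcal{T}^{x_k}_{x_{k-1}}\varepsilon_{k-1}$ carries the transported previous error and $B_k = a_k + (1-\alpha_k)b_k$ with $a_k = {\rm grad} f_{\mathcal{S}_k}(x_k) - {\rm grad} F(x_k)$ and $b_k = \mathcal{T}^{x_k}_{x_{k-1}}({\rm grad} F(x_{k-1}) - {\rm grad} f_{\mathcal{S}_k}(x_{k-1}))$. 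Since $x_k$, $x_{k-1}$ and $\varepsilon_{k-1}$ are all $\mathcal{F}_k$-measurable while $\mathcal{S}_k$ is drawn independently, the unbiasedness in Assumption~\ref{ass2} gives $\mathbb{E}[B_k \mid \mathcal{F}_k] = 0$. Expanding $\|\varepsilon_k\|^2 = \|A_k\|^2 + 2\langle A_k, B_k\rangle + \|B_k\|^2$ and taking the conditional expectation then kills the cross term, while the isometry of the vector transport gives $\|A_k\|^2 = (1-\alpha_k)^2\|\varepsilon_{k-1}\|^2$.

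It remains to bound $\mathbb{E}[\|B_k\|^2 \mid \mathcal{F}_k]$. I would rewrite $B_k = \alpha_k a_k + (1-\alpha_k)(a_k + b_k)$ and apply $\|u+v\|^2 \le 2\|u\|^2 + 2\|v\|^2$, which isolates the $\alpha_k^2$ and $(1-\alpha_k)^2$ contributions. The first piece is controlled directly by Assumption~\ref{ass2}, giving $\mathbb{E}[\|a_k\|^2 \mid \mathcal{F}_k] \le \sigma^2$ (the mini-batch average only improves this). For the second piece I would observe that $a_k + b_k$ is exactly the centered version of $X_k := {\rm grad} f_{\mathcal{S}_k}(x_k) - \mathcal{T}^{x_k}_{x_{k-1}}{\rm grad} f_{\mathcal{S}_k}(x_{k-1})$, that is, $a_k + b_k = X_k - \mathbb{E}[X_k \mid \mathcal{F}_k]$, so that $\mathbb{E}[\|a_k+b_k\|^2 \mid \mathcal{F}_k] \le \mathbb{E}[\|X_k\|^2 \mid \mathcal{F}_k]$ since variance is dominated by the second moment.

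The hard part will be bounding $\mathbb{E}[\|X_k\|^2 \mid \mathcal{F}_k]$ through the mean-squared retraction-Lipschitz constant. Because $x_k = \mathcal{R}_{x_{k-1}}(-\eta_{k-1}\mathcal{G}_{k-1})$, Assumption~\ref{ass3} applies along this retraction curve with $\omega = -\eta_{k-1}\mathcal{G}_{k-1}$, but it is stated for the transport $\mathcal{T}_{x_k}^{x_{k-1}}$ (from $x_k$ back to $x_{k-1}$), whereas $X_k$ uses the opposite direction $\mathcal{T}^{x_k}_{x_{k-1}}$. I would reconcile the two by applying the isometric transport $\mathcal{T}_{x_k}^{x_{k-1}}$, which inverts $\mathcal{T}^{x_k}_{x_{k-1}}$ along the curve and preserves norms, thereby converting $\|X_k\|$ into precisely the form appearing in Assumption~\ref{ass3}. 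A minor companion step is the mini-batch: writing $X_k$ as the sample average over $\mathcal{S}_k$ and using convexity of $\|\cdot\|^2$ reduces the estimate to the per-sample quantity, so Assumption~\ref{ass3} yields $\mathbb{E}[\|X_k\|^2 \mid \mathcal{F}_k] \le \tilde{L}^2\eta_{k-1}^2\|\mathcal{G}_{k-1}\|^2$. Combining the three bounds and taking total expectation gives the stated recursion.
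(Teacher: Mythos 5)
Your proposal is correct and follows essentially the same route as the paper's proof: the identical decomposition of $\varepsilon_k$ into the transported previous error plus a conditionally mean-zero increment, vanishing of the cross term by unbiasedness and $\mathcal{F}_k$-measurability, the split $\|u+v\|^2\le 2\|u\|^2+2\|v\|^2$ isolating the $\alpha_k^2$ and $(1-\alpha_k)^2$ pieces, the variance-dominated-by-second-moment step, and Assumption \ref{ass3} applied along the retraction step with $\omega=-\eta_{k-1}\mathcal{G}_{k-1}$. Your explicit reconciliation of the transport direction via isometry and the remark on the mini-batch factor $1/|\mathcal{S}_k|$ are minor refinements of details the paper passes over silently.
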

\begin{proof}
	From the definition of $\mathcal{F}_k$ and $v_k$ in \eqref{storm}, we have $\mathbb{E}\|\varepsilon_k\|^2=\mathbb{E}[\mathbb{E}[\|\varepsilon_k\|^2|\mathcal{F}_k]]$. Then
	\[\begin{aligned}
		&\mathbb{E}[\|\varepsilon_k\|^2|\mathcal{F}_k]\\
		=&\mathbb{E}[\|{\rm grad} f_{\mathcal{S}_k}(x_k) + (1-\alpha_k)\mathcal{T}^{x_k}_{x_{k-1}}(v_{k-1} - {\rm grad} f_{\mathcal{S}_k}(x_{k-1}))-{\rm grad} F(x_k)\|^2|\mathcal{F}_k]\\
		=&\mathbb{E}[\|(1-\alpha_k)\mathcal{T}^{x_k}_{x_{k-1}}(v_{k-1}-{\rm grad} F(x_{k-1}))+\alpha_k({\rm grad} f_{\mathcal{S}_k}(x_k)-{\rm grad} F(x_k))\\
		&+(1-\alpha_k)({\rm grad} f_{\mathcal{S}_k}(x_k)- \mathcal{T}^{x_k}_{x_{k-1}}{\rm grad} f_{\mathcal{S}_k}(x_{k-1})+\mathcal{T}^{x_k}_{x_{k-1}}{\rm grad} F(x_{k-1})-{\rm grad} F(x_k))\|^2|\mathcal{F}_k]\\
		\overset{(i)}{\leq} & (1-\alpha_k)^2\|\varepsilon_{k-1}\|^2+2\alpha_k^2\mathbb{E}[\|{\rm grad} f_{\mathcal{S}_k}(x_k)-{\rm grad} F(x_k)\|^2|\mathcal{F}_k]\\
		&+2(1-\alpha_k)^2\mathbb{E}[\|{\rm grad} f_{\mathcal{S}_k}(x_k)- \mathcal{T}^{x_k}_{x_{k-1}}{\rm grad} f_{\mathcal{S}_k}(x_{k-1})+\mathcal{T}^{x_k}_{x_{k-1}}{\rm grad} F(x_{k-1})-{\rm grad} F(x_k)\|^2|\mathcal{F}_k]\\
		\overset{(ii)}{\leq} & (1-\alpha_k)^2\|\varepsilon_{k-1}\|^2+2\alpha_k^2\mathbb{E}[\|{\rm grad} f_{\mathcal{S}_k}(x_k)-{\rm grad} F(x_k)\|^2|\mathcal{F}_k]\\
		&+2(1-\alpha_k)^2\mathbb{E}[\|{\rm grad} f_{\mathcal{S}_k}(x_k)- \mathcal{T}^{x_k}_{x_{k-1}}{\rm grad} f_{\mathcal{S}_k}(x_{k-1})\|^2|\mathcal{F}_k]\\
		\overset{(iii)}{\leq} & (1-\alpha_k)^2\|\varepsilon_{k-1}\|^2+\frac{2\alpha_k^2\sigma^2}{|\mathcal{S}_k|}+\frac{2(1-\alpha_k)^2\eta_{k-1}^2\tilde{L}^2}{|\mathcal{S}_k|}\|\mathcal{G}_{k-1}\|^2,
	\end{aligned}\]
	where $(i)$ holds by isometry property of vector transport $\mathcal{T}^{x_k}_{x_{k-1}}$ and the fact that it is measurable in $\mathcal{F}_k$; $(ii)$ follows from Assumption \ref{ass2} and $\mathbb{E}\|x-\mathbb{E}[x]\|^2\leq \mathbb{E}\|x\|^2$; $(iii)$ holds by Assumptions \ref{ass2} and \ref{ass3}. By taking full expectation and $|\mathcal{S}_k|\geq 1$, we get the desired result. 
\end{proof}

Define a merit function $\psi_k=\mathbb{E}[\mathcal{L}_{\rho_k}(w_k)+\gamma_k\|\varepsilon_k\|^2]$ with $\gamma_{k+1}=c_\gamma k^{1/3},c_\gamma>0$. The following lemma shows that $\psi$ is lower bounded.
\begin{lemma}\label{lem:MFlw}
	Suppose that Assumption \ref{ass1} holds. Consider the sequence $\{w_k\}$ generated by Algorithm \ref{alg1} and let $\psi^*:=F^*+g^*-\lambda_{\max}(L_g + \lambda_{\max} + \|A\|\mathcal{D})$. Then the sequence $\{\psi_k\}$ is uniformly lower bounded by $\psi^*$.
\end{lemma}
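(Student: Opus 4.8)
The plan is to exploit the fact that the momentum-error contribution to the merit function is nonnegative, so that the whole problem reduces to a deterministic lower bound on the augmented Lagrangian value $\mathcal{L}_{\rho_k}(w_k)$. Since $\gamma_k = c_\gamma (k-1)^{1/3}\ge 0$ with $c_\gamma>0$ and $\|\varepsilon_k\|^2\ge 0$, the term $\gamma_k\|\varepsilon_k\|^2$ is nonnegative, so it suffices to show $\mathbb{E}[\mathcal{L}_{\rho_k}(w_k)]\ge\psi^*$, and in fact $\mathcal{L}_{\rho_k}(w_k)\ge\psi^*$ pointwise. Expanding $\mathcal{L}_{\rho_k}(w_k)=F(x_k)+g(y_k)-\langle\lambda_k,Ax_k-y_k\rangle+\tfrac{\rho_k}{2}\|Ax_k-y_k\|^2$, three of the four pieces are immediately controlled: $F(x_k)\ge F^*$ and $g(y_k)\ge g^*$ by Assumption~\ref{ass1}(iii), and the quadratic penalty term is nonnegative and may be dropped. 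The only genuinely problematic piece is the sign-indefinite coupling term $-\langle\lambda_k,Ax_k-y_k\rangle$, and controlling it is where the whole argument lives.

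For the coupling term I would first invoke the uniform dual bound $\|\lambda_k\|\le\lambda_{\max}$, which was established inside the proof of Lemma~\ref{lem-bdp} and is exactly the reason the adaptive stepsize rule \eqref{beta} was designed as it is. Combined with Cauchy--Schwarz this gives $-\langle\lambda_k,Ax_k-y_k\rangle\ge-\lambda_{\max}\|Ax_k-y_k\|$. It then remains to bound the primal residual $\|Ax_k-y_k\|$ uniformly. Here I would reuse the estimate \eqref{Axk-yk} from Lemma~\ref{Lrho-retr-smooth}, namely $\|Ax_k-y_k\|\le\frac{L_g+\lambda_{\max}}{\rho_{k-1}}+\|A\|\|x_k-x_{k-1}\|$, and simplify using $\rho_{k-1}=c_\rho(k-1)^{1/3}\ge 1$ (as $c_\rho\ge 1$) together with the compactness bound $\|x_k-x_{k-1}\|\le\mathcal{D}$ from Assumption~\ref{ass1}(i), yielding $\|Ax_k-y_k\|\le L_g+\lambda_{\max}+\|A\|\mathcal{D}$. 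Substituting back gives $\mathcal{L}_{\rho_k}(w_k)\ge F^*+g^*-\lambda_{\max}(L_g+\lambda_{\max}+\|A\|\mathcal{D})=\psi^*$, and taking expectations and adding the nonnegative momentum term finishes the proof.

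The main obstacle is precisely the indefinite sign of the multiplier coupling term: unlike $F$ and $g$, it has no intrinsic lower bound, so the boundedness of $\{\lambda_k\}$ and the tight residual control from the carefully tuned penalty $\rho_k$ and dual stepsize $\beta_k$ are doing all the work, and this is where the proof genuinely relies on the earlier structural lemmas rather than on routine estimates. Two smaller points I would flag explicitly: the residual bound \eqref{Axk-yk} involves index $k-1$, so the case $k=1$ must be treated separately (it is just a fixed initialization quantity, bounded directly), and the paper uses the convention $\|x\|\le\mathcal{D}$ for $x\in\mathcal{M}$ inherited from the proof of Lemma~\ref{Lrho-retr-smooth}. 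As an alternative that avoids the base-case bookkeeping entirely, one can complete the square, $-\langle\lambda_k,Ax_k-y_k\rangle+\tfrac{\rho_k}{2}\|Ax_k-y_k\|^2\ge-\tfrac{\|\lambda_k\|^2}{2\rho_k}\ge-\tfrac{\lambda_{\max}^2}{2\rho_k}$, which for $\rho_k\ge 1$ dominates the stated $\psi^*$ and covers all $k\ge 1$ uniformly; I would present the direct residual route as primary since it reproduces the exact constant claimed in the statement.
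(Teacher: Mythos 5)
Your proof is correct and follows essentially the same route as the paper: drop the nonnegative momentum and quadratic-penalty terms, bound the coupling term via Cauchy--Schwarz with $\|\lambda_k\|\le\lambda_{\max}$ from Lemma~\ref{lem-bdp}, and control the residual through \eqref{Axk-yk} together with $\rho_{k-1}\ge 1$ and the diameter bound. Your extra remarks on the $k=1$ base case and the completing-the-square alternative are sensible refinements the paper omits, but they do not change the argument.
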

\begin{proof}
	From the definition of $\psi_k$, we have
	\[\begin{aligned}
		\psi_k&= F(x_k)+g(y_k)-\langle \lambda_k,Ax_k-y_k \rangle+\frac{\rho_k}{2}\|Ax_k-y_k\|^2+ \gamma_k \|\varepsilon_k\|^2\\
		& \geq  F(x_k)+g(Ax_k)-\|\lambda_k\| \|Ax_k-y_k\|\\
		&\overset{\eqref{Axk-yk}}{\geq} F^*+g^*-\lambda_{\max}\left(\frac{L_g + \lambda_{\max}}{\rho_{k-1}} +  \|A\|\| x_k-x_{k-1}\|\right)\geq \psi^*,
	\end{aligned}\]
	where the last inequality holds by Assumption \ref{ass1}.
\end{proof}

The following lemma gives an upper bound for $\partial\mathcal{L}_\rho$.
\begin{lemma}\label{lem:relation-optima-x}
	Suppose that Assumption \ref{ass1} holds. Consider the sequence $\{w_k\}$ generated by Algorithm \ref{alg1} and denote $ \bar{\lambda}_{k} = \lambda_{k-1} - \rho_{k-1} (Ax_{k} - y_{k})$. Let $c_\rho\geq 1$ and $0<c_\eta\leq \min\{\frac{1}{1+M},\frac{1}{N}\}\frac{1}{c_\rho}$, where $M$ is a constant defined in Lemma \ref{Lrho-retr-smooth}, $N=(p L_p + \zeta) G + p (L_{\nabla_F}+\|A\|^2)$ and $p,L_p,\zeta,G$ are constants defined in Lemma \ref{lem:rieman-lip}.  Then we have
	\[\begin{aligned}
		\| \mathcal{P}_{T_{x_k}\mathcal{M}} ( -A^\top \bar{\lambda}_k) + {\rm grad}F(x_k) \| & \leq 2(\|\mathcal{G}_{k-1}\| +\|\varepsilon_{k-1}\|) , \\
		{\rm dist}(-\bar{\lambda}_k , \partial g(y_k)) & \leq p \|A\| \|\mathcal{G}_{k-1}\|,  \\
		\|Ax_k - y_k\| & \leq \frac{L_g+\lambda_{\max}}{\rho_{k-1}}+p \eta_{k-1}\|A\| \|\mathcal{G}_{k-1}\|. 
	\end{aligned}\]
\end{lemma}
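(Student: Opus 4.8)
The plan is to prove the three inequalities separately, matching each to one of the KKT residuals, and in every case to reduce the quantity evaluated at the new iterate $x_k$ to the computable estimator $\mathcal{G}_{k-1}$ and the error $\varepsilon_{k-1}$ by exploiting the retraction step $x_k = \mathcal{R}_{x_{k-1}}(-\eta_{k-1}\mathcal{G}_{k-1})$. Two tools will recur throughout: the retraction bound $\|x_k - x_{k-1}\| \leq p\,\eta_{k-1}\|\mathcal{G}_{k-1}\|$ from Lemma \ref{proposi:ret-lips}, and the scaling identity $\rho_{k-1}\eta_{k-1} = c_\rho c_\eta \leq 1$, which converts penalty-weighted quantities into bounded constants.

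For the feasibility bound (third inequality), I would combine the already-derived estimate \eqref{Axk-yk}, namely $\|Ax_k - y_k\| \leq (L_g + \lambda_{\max})/\rho_{k-1} + \|A\|\,\|x_k - x_{k-1}\|$, with the retraction bound on $\|x_k - x_{k-1}\|$. This is immediate. For the dual-feasibility bound (second inequality), the key is the optimality condition of the $y$-subproblem \eqref{y-sub} solved at iteration $k-1$, namely $\rho_{k-1}(Ax_{k-1} - y_k) - \lambda_{k-1} \in \partial g(y_k)$. Writing $\hat{\lambda} := \lambda_{k-1} - \rho_{k-1}(Ax_{k-1} - y_k)$, this reads $-\hat{\lambda} \in \partial g(y_k)$, so that ${\rm dist}(-\bar{\lambda}_k, \partial g(y_k)) \leq \|\bar{\lambda}_k - \hat{\lambda}\| = \rho_{k-1}\|A\|\,\|x_k - x_{k-1}\|$, since $\bar{\lambda}_k$ and $\hat{\lambda}$ differ only in evaluating $Ax$ at $x_k$ versus $x_{k-1}$. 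Applying the retraction bound and $\rho_{k-1}\eta_{k-1}\leq 1$ then yields the stated $p\|A\|\,\|\mathcal{G}_{k-1}\|$.

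The main obstacle is the stationarity bound (first inequality). The first step is the algebraic observation that $-A^\top\bar{\lambda}_k = \rho_{k-1}A^\top(Ax_k - y_k - \lambda_{k-1}/\rho_{k-1})$, so that the target equals ${\rm grad}_x \mathcal{L}_{\rho_{k-1}}(x_k, y_k, \lambda_{k-1})$ exactly. By the construction of $\mathcal{G}_{k-1}$ in \eqref{sto_Rgrad} together with $\varepsilon_{k-1} = v_{k-1} - {\rm grad}F(x_{k-1})$, one has at the old point the identity ${\rm grad}_x \mathcal{L}_{\rho_{k-1}}(x_{k-1}, y_k, \lambda_{k-1}) = \mathcal{G}_{k-1} - \varepsilon_{k-1}$. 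The difficulty is that these two Riemannian gradients live in different tangent spaces $T_{x_k}\mathcal{M}$ and $T_{x_{k-1}}\mathcal{M}$; I would insert the isometric vector transport $\mathcal{T}_{x_{k-1}}^{x_k}$, use its isometry to split off $\|\mathcal{G}_{k-1}\| + \|\varepsilon_{k-1}\|$, and bound the residual $\|{\rm grad}_x \mathcal{L}_{\rho_{k-1}}(x_k,y_k,\lambda_{k-1}) - \mathcal{T}_{x_{k-1}}^{x_k}{\rm grad}_x \mathcal{L}_{\rho_{k-1}}(x_{k-1},y_k,\lambda_{k-1})\|$ via Lemma \ref{lem:rieman-lip} applied to $\mathcal{L}_{\rho_{k-1}}(\cdot, y_k, \lambda_{k-1})$, whose Euclidean gradient is Lipschitz with a constant proportional to $\rho_{k-1}$.

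The subtle point, and the crux of the whole lemma, is the bookkeeping of the penalty parameter in this last residual. Because $\|\omega\| = \eta_{k-1}\|\mathcal{G}_{k-1}\|$ and every $\rho_{k-1}$-weighted factor therefore appears multiplied by $\rho_{k-1}\eta_{k-1} = c_\rho c_\eta$, the residual collapses to $N\, c_\rho c_\eta\,\|\mathcal{G}_{k-1}\|$ with $N = (pL_p + \zeta)G + p(L_{\nabla F} + \|A\|^2)$ as in the statement. The step-size restriction $c_\eta \leq \frac{1}{Nc_\rho}$ is precisely what forces $N\,c_\rho c_\eta \leq 1$, so that the residual contributes at most one additional $\|\mathcal{G}_{k-1}\|$ and the target is bounded by $2(\|\mathcal{G}_{k-1}\| + \|\varepsilon_{k-1}\|)$. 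I expect the delicate part to be verifying that this scaling indeed keeps all the $\rho_{k-1}$-dependent constants uniformly bounded across $k$, rather than letting them grow with the penalty parameter.
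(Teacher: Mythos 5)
Your proposal is correct and takes essentially the same route as the paper's proof: the paper likewise identifies the stationarity residual with ${\rm grad}_x\mathcal{L}_{\rho_{k-1}}(x_k,y_k,\lambda_{k-1})$, compares it to $\mathcal{G}_{k-1}-\varepsilon_{k-1}$ at $x_{k-1}$ via the isometric vector transport and Lemma \ref{lem:rieman-lip}, and uses the $y$-subproblem optimality condition, \eqref{Axk-yk}, the retraction bound from Lemma \ref{proposi:ret-lips}, and the scalings $\rho_{k-1}\eta_{k-1}=c_\rho c_\eta$ and $c_\eta\leq\frac{1}{Nc_\rho}$ exactly as you describe.
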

\begin{proof}
	It follows from the formulas of $\bar{\lambda}_k$ and $\mathcal{L}_\rho$ that
	\[\begin{aligned}
		&\mathcal{P}_{T_{x_k}\mathcal{M}} (\nabla F(x_k)-A^\top \bar{\lambda}_k)\\
		= &\mathcal{P}_{T_{x_k}\mathcal{M}} \left( \nabla F(x_k)+ \rho_{k-1} A^\top( Ax_k - y_k -\frac{\lambda_{k-1}}{\rho_{k-1}})\right)\\ 
		= &{\rm grad}F(x_k)+\mathcal{P}_{T_{x_k}\mathcal{M}} \left( \rho_{k-1} A^\top( Ax_k - y_k -\frac{\lambda_{k-1}}{\rho_{k-1}})\right)\\ 
		&-\mathcal{T}_{x_{k-1}}^{x_k}\left({\rm grad} F(x_{k-1})+\mathcal{P}_{T_{x_{k-1}}\mathcal{M}}(\rho_{k-1} A^\top(Ax_{k-1}-y_k-\frac{\lambda_{k-1}}{\rho_{k-1}}) )\right)\\
		&+\mathcal{T}_{x_{k-1}}^{x_k}({\rm grad} F(x_{k-1})-v_{k-1})+\mathcal{T}_{x_{k-1}}^{x_k}\left(v_{k-1}+\mathcal{P}_{T_{x_{k-1}}\mathcal{M}}(\rho_{k-1} A^\top(Ax_{k-1}-y_k-\frac{\lambda_{k-1}}{\rho_{k-1}}))\right)\\
		=& {\rm grad}_x\mathcal{L}_{\rho_{k-1}}(x_k,y_k,\lambda_{k-1}) - \mathcal{T}_{x_{k-1}}^{x_k}{\rm grad}_x\mathcal{L}_{\rho_{k-1}}(x_{k-1},y_k,\lambda_{k-1})+\mathcal{T}_{x_{k-1}}^{x_k}\varepsilon_{k-1} +\mathcal{T}_{x_{k-1}}^{x_k}\mathcal{G}_{k-1}.
	\end{aligned}\]
	Since $\nabla_x\mathcal{L}_{\rho_{k-1}}(x,y_k,\lambda_{k-1})$ is Lipschitz continuous with the constant $L_{\nabla_F}+\rho_{k-1}\|A\|^2$, it follows Lemma \ref{lem:rieman-lip} that ${\rm grad}_x\mathcal{L}_{\rho_{k-1}}(x,y_k,\lambda_{k-1})$ is Lipschitz continuous with the constant
	\[\ell_k=(p L_p + \zeta) G + p  (L_{\nabla_F}+\rho_{k-1}\|A\|^2)\leq N\rho_{k-1},\]
	where $N:=(p L_p + \zeta) G + p (L_{\nabla_F}+\|A\|^2)$. Then, we have 
	\[\begin{aligned}
		& \| \mathcal{P}_{T_{x_k}\mathcal{M}} ( \nabla F(x_k) -  A^\top \bar{\lambda}_k  ) \|  \\
		\leq &\| {\rm grad}_x\mathcal{L}_{\rho_{k-1}}(x_k,y_k,\lambda_{k-1}) - \mathcal{T}_{x_{k-1}}^{x_k}{\rm grad}_x\mathcal{L}_{\rho_{k-1}}(x_{k-1},y_k,\lambda_{k-1})\|\\
		&+\|\mathcal{T}_{x_{k-1}}^{x_k}\varepsilon_{k-1} \|+\|\mathcal{T}_{x_{k-1}}^{x_k}\mathcal{G}_{k-1}\|\\
		\overset{(i)}{\leq} & N\rho_{k-1}\|\eta_{k-1}\mathcal{G}_{k-1}\|+\|\varepsilon_{k-1}\| +\|\mathcal{G}_{k-1}\|\\
		\overset{(ii)}{\leq}& 2(\| \mathcal{G}_{k-1}\| +\|\varepsilon_{k-1}\|) ,
	\end{aligned}\]
	where $(i)$ follows from Lemma \ref{lem:rieman-lip} and the fact that  $\mathcal{T}$ is isometric; $(ii)$ holds by $c_\eta \leq\frac{1}{Nc_\rho}$.  It follows from the optimality condition of \eqref{y-sub} that
	\[0\in \partial g(y_k)+\lambda_{k-1} - \rho_{k-1}(Ax_{k-1} - y_k).\]
	By the definition of $\bar{\lambda}_k$, we have that
	\[
		\text{dist}(-\bar{\lambda}_k , \partial g(y_k))= \rho_{k-1} \|A\| \| x_{k} - x_{k-1} \|
		\overset{(i)}{\leq} p\rho_{k-1}  \eta_{k-1} \|A\| \|\mathcal{G}_{k-1}\|
		\overset{(ii)}{\leq} p \|A\| \|\mathcal{G}_{k-1}\|,
	\]
	where $(i)$ holds by Lemma \ref{proposi:ret-lips}; $(ii)$ holds by follows $c_\eta \leq\frac{1}{(1+M)c_\rho}\leq\frac{1}{c_\rho}$. Similarly, we have
	\[
		\|Ax_k - y_k\| \overset{\eqref{Axk-yk}}{\leq} \frac{L_g + \lambda_{\max}}{\rho_{k-1}} +  \|A\|\| x_k-x_{k-1}\|
		\leq\frac{L_g + \lambda_{\max}}{\rho_{k-1}} +  p \eta_{k-1} \|A\|\| \mathcal{G}_{k-1}\|.
	\]
	The proof is completed.
\end{proof}

Now we prove the convergence of Algorithm \ref{alg1}.  

\begin{theorem}\label{the:sumG}
	Suppose that Assumptions \ref{ass1}-\ref{ass3} hold. Denote $ \bar{\lambda}_{k} = \lambda_{k-1} - \rho_{k-1} (Ax_{k} -y_{k})$ and consider the sequence $\left\{w_k\right\}_{k=1}^K$ generated by Algorithm \ref{alg1}. Let $c_\rho\geq 2\sqrt{2}$, $0.8\leq c_\alpha\leq 1$, $0<c_\beta\leq\frac{c_\rho}{3}$, $0< c_\nu\leq c_\rho$, $0<c_\gamma\leq c_\rho$, $c_\nu c_\gamma \geq 8$ and $c_\eta \leq \min\{\frac{1}{8p^2\|A\|^2 }, \frac{1}{1+M},\frac{1}{N},\frac{1}{8\tilde{L}^2}\}\frac{1}{c_\rho}$, where $M$ and $N$ are constants defined in Lemmas \ref{Lrho-retr-smooth} and \ref{lem:relation-optima-x}, respectively. Then we have
	\[\sum_{k=1}^{K}\frac{c_\alpha c_\gamma}{2} k^{-1/3}\mathbb{E}[\|\varepsilon_k\|^2]+\sum_{k=1}^{K}\frac{c_\eta }{8}k^{-1/3}\mathbb{E}[\|\mathcal{G}_k\|^2]\leq \mathcal{C}.\]
	where $\mathcal{C}=\psi_1-\psi^* +2c_\gamma c_\alpha^2\sigma^2 \ln(K)+(\frac{\mathcal{C}_1}{3c_\rho}+\frac{2c_\beta \mathcal{C}_2}{c_\rho^2})(L_g+\lambda_{\max})^2$ and $\mathcal{C}_1,\mathcal{C}_2$ are constants.
\end{theorem}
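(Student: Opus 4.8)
The plan is to run a Lyapunov (merit-function) telescoping argument on $\psi_k=\mathbb{E}[\mathcal{L}_{\rho_k}(w_k)+\gamma_k\|\varepsilon_k\|^2]$. First I would write the one-step change $\psi_{k+1}-\psi_k$ and bound its two pieces separately: the augmented-Lagrangian part is controlled by taking expectations in Lemma \ref{lem:ALFdec}, and the error part $\gamma_{k+1}\mathbb{E}\|\varepsilon_{k+1}\|^2$ is expanded by applying Lemma \ref{lem-eeb} with the index shifted to $k+1$. Adding these and using $(1-\alpha_{k+1})^2\le 1-\alpha_{k+1}$ produces, apart from the dual-difference term, a coefficient $\frac{1}{2\nu_k}+\gamma_{k+1}(1-\alpha_{k+1})^2-\gamma_k$ on $\mathbb{E}\|\varepsilon_k\|^2$ and a coefficient $-\frac{\eta_k}{2}+2\gamma_{k+1}\eta_k^2\tilde L^2$ on $\mathbb{E}\|\mathcal{G}_k\|^2$, together with the benign additive term $2\gamma_{k+1}\alpha_{k+1}^2\sigma^2$.

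The central step is to dispose of the dual-difference term $(\frac{c_\rho^3}{6\beta_{k+1}^2\rho_k^2}+\frac{1}{\beta_{k+1}})\mathbb{E}\|\lambda_{k+1}-\lambda_k\|^2$. Here I would insert Lemma \ref{lem-bdp} together with $\|x_{k+1}-x_k\|\le p\eta_k\|\mathcal{G}_k\|$ from Lemma \ref{proposi:ret-lips}, so that $\|\lambda_{k+1}-\lambda_k\|^2$ splits into a $(L_g+\lambda_{\max})^2$ piece and a $\|\mathcal{G}_k\|^2$ piece. After multiplying by the prefactor and substituting $\rho_k=c_\rho k^{1/3}$, $\eta_k=c_\eta k^{-1/3}$, the $\|\mathcal{G}_k\|^2$ contribution is of order $k^{-4/3}+k^{-1}$, hence $o(k^{-1/3})$ and dominated by the $-\frac{\eta_k}{2}$ term; the feasibility piece becomes $(\frac{1}{3c_\rho}k^{-4/3}+\frac{2\beta_{k+1}}{c_\rho^2 k^{2/3}})(L_g+\lambda_{\max})^2$. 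With the parameter budget $c_\rho\ge 2\sqrt2$, $c_\beta\le c_\rho/3$, $c_\nu c_\gamma\ge 8$, $c_\alpha\ge 0.8$ and $c_\eta\le\min\{\ldots\}/c_\rho$, careful bookkeeping (using concavity $k^{1/3}-(k-1)^{1/3}\le\frac13(k-1)^{-2/3}$ to control the $\gamma_k$ differences) then forces the coefficient on $\mathbb{E}\|\varepsilon_k\|^2$ below $-\frac{c_\alpha c_\gamma}{2}k^{-1/3}$ and the coefficient on $\mathbb{E}\|\mathcal{G}_k\|^2$ below $-\frac{c_\eta}{8}k^{-1/3}$.

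Finally I would telescope $\sum_{k=1}^K(\psi_{k+1}-\psi_k)=\psi_{K+1}-\psi_1$ and invoke the lower bound $\psi_{K+1}\ge\psi^*$ from Lemma \ref{lem:MFlw} to move the negative coefficients to the left. The residual additive terms accumulate into $\mathcal{C}$: the noise term gives $\sum_k 2\gamma_{k+1}\alpha_{k+1}^2\sigma^2=2c_\gamma c_\alpha^2\sigma^2\sum_k k^{-1}\le 2c_\gamma c_\alpha^2\sigma^2\ln K$ (the source of the $\log$ factor), while the feasibility residual sums to $(\frac{\mathcal{C}_1}{3c_\rho}+\frac{2c_\beta\mathcal{C}_2}{c_\rho^2})(L_g+\lambda_{\max})^2$ with $\mathcal{C}_1=\sum_k k^{-4/3}$ and $\mathcal{C}_2=\sum_k \frac{1}{k\ln^2(k+2)}$. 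I expect the main obstacle to be exactly this feasibility residual: convergence of $\mathcal{C}_2$ hinges on the $\ln^2$ factor built into the first branch of the $\beta_{k+1}$ rule in \eqref{beta} (a bare $1/k$ would diverge), and ensuring the accompanying $\|\mathcal{G}_k\|^2$ contribution stays $o(k^{-1/3})$ is precisely what pins down $c_\eta\le\frac{1}{8p^2\|A\|^2 c_\rho}$ and $c_\beta\le c_\rho/3$. A secondary nuisance is the boundary index $k=1$, where $\gamma_1=0$ makes the $\varepsilon$-coefficient momentarily positive; since $\mathbb{E}\|\varepsilon_1\|^2\le\sigma^2$ is bounded, this single term is absorbed into $\mathcal{C}$.
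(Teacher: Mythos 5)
Your proposal is correct and follows essentially the same route as the paper's proof: the same merit-function telescoping on $\psi_k$, the same splitting of $\|\lambda_{k+1}-\lambda_k\|^2$ via Lemma \ref{lem-bdp} and Lemma \ref{proposi:ret-lips} into a $(L_g+\lambda_{\max})^2$ piece and a $\|\mathcal{G}_k\|^2$ piece, the same three coefficient bounds, and the same accumulation of $\mathcal{C}_1$, $\mathcal{C}_2$, and the $\ln K$ noise term. Your remark about the boundary index $k=1$ (where the $\varepsilon$-coefficient is positive and must be absorbed into $\mathcal{C}$ using $\mathbb{E}\|\varepsilon_1\|^2\le\sigma^2$) is a legitimate detail that the paper's own proof glosses over, and your fix is the right one.
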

\begin{proof}
	By Lemmas \ref{proposi:ret-lips} and \ref{lem-bdp}-\ref{lem-eeb}, taking full expectation on \eqref{decALF}, we have
	\begin{align}
		&\psi_{k+1}-\psi_k\nonumber\\
		=&\mathbb{E}[\mathcal{L}_{\rho_{k+1}}(x_{k+1},y_{k+1},\lambda_{k+1})]-\mathbb{E}[\mathcal{L}_{\rho_k}(x_k,y_k,\lambda_k)]+\gamma_{k+1} \mathbb{E}[\|\varepsilon_{k+1}\|^2]-\gamma_k \mathbb{E}[\|\varepsilon_k\|^2]\nonumber\\
		\leq & (\frac{c_\rho^3}{6\beta_{k+1}^2\rho_k^2}+\frac{1}{\beta_{k+1}})\mathbb{E}[\|\lambda_{k+1}-\lambda_k\|^2]+\frac{1}{2\nu_k}\mathbb{E} [\|\varepsilon_k\|^2]- \frac{\eta_k}{2}\mathbb{E}[\|\mathcal{G}_k\|^2]+\gamma_{k+1} \mathbb{E}[\|\varepsilon_{k+1}\|^2]-\gamma_k \mathbb{E}[\|\varepsilon_k\|^2]\nonumber\\
		\leq & (\frac{c_\rho^3}{6\rho_k^2}+\beta_{k+1})\left(2p^2\eta_k^2\|A\|^2\mathbb{E}[\|\mathcal{G}_k\|^2]+\frac{2 }{\rho_k^2}(L_g+\lambda_{\max})^2\right)+(\frac{1}{2\nu_k}-\gamma_k) \mathbb{E}[\|\varepsilon_k\|^2]- \frac{\eta_k}{2}\mathbb{E}[\|\mathcal{G}_k\|^2]\nonumber\\
		&+\gamma_{k+1} (1-\alpha_{k+1})^2\mathbb{E}[\|\varepsilon_k\|^2]+2\gamma_{k+1}\alpha_{k+1}^2\sigma^2+2\gamma_{k+1}(1-\alpha_{k+1})^2\eta_k^2\tilde{L}^2\mathbb{E}[\|\mathcal{G}_k\|^2]\nonumber\\
		=&2\gamma_{k+1}\alpha_{k+1}^2\sigma^2- \underset{\Gamma_1}{\underbrace{\left(\frac{\eta_k}{2}-2(\frac{c_\rho^3}{6\rho_k^2}+\beta_{k+1})p^2\eta_k^2\|A\|^2-2\gamma_{k+1}(1-\alpha_{k+1})^2\eta_k^2\tilde{L}^2\right)}}\mathbb{E}[\|\mathcal{G}_k\|^2]\nonumber\\
		&+\underset{\Gamma_2}{\underbrace{\left(\frac{1}{2\nu_k} +\gamma_{k+1} (1-\alpha_{k+1})^2-\gamma_k\right)}} \mathbb{E}[\|\varepsilon_k\|^2]+\underset{\Gamma_3}{\underbrace{(\frac{c_\rho^3}{6\rho_k^2}+\beta_{k+1})\frac{2 }{\rho_k^2}(L_g+\lambda_{\max})^2}}.\label{psik1k}
	\end{align}
	Next we bound the terms $\Gamma_1$, $\Gamma_2$ and $\Gamma_3$ in \eqref{psik1k}, respectively. For $\Gamma_1$, since $\eta_k=c_\eta k^{-1/3}$, $\gamma_{k+1}=c_\gamma k^{1/3}$ and $\beta_{k+1}\leq c_\beta k^{-1/3}$, we have
	\[\begin{aligned}
		\Gamma_1\overset{(i)}{\geq}&\frac{c_\eta}{2} k^{-1/3}-2(\frac{c_\rho }{6}k^{-1/3}+c_\beta k^{-1/3})p^2\|A\|^2\eta_k^2-2c_\gamma k^{1/3}\tilde{L}^2c_\eta^2 k^{-2/3}\\
		=&(\frac{c_\eta}{2}-2c_\gamma \tilde{L}^2c_\eta^2) k^{-1/3}-(\frac{c_\rho}{3}+2c_\beta) k^{-1/3}p^2\|A\|^2 \eta_k^2\\
		\overset{(ii)}{\geq}&\frac{\eta_k}{4}-(\frac{c_\rho}{3}+2c_\beta)p^2\|A\|^2 c_\eta \eta_k\\
		\overset{(iii)}{\geq}&\frac{\eta_k}{8},
	\end{aligned}\]
	where $(i)$ holds by $\frac{c_\rho^3}{6\rho_k^2}=\frac{c_\rho}{6}k^{-2/3}\leq\frac{c_\rho}{6}k^{-1/3}$; $(ii)$ holds by $0<c_\gamma \leq c_\rho$ and $c_\eta\leq\frac{1}{8c_\rho\tilde{L}^2}\leq\frac{1}{8c_\gamma\tilde{L}^2}$; $(iii)$ holds by $0<c_\beta\leq\frac{c_\rho}{3}$ and $0<c_\eta\leq \frac{1}{8p^2\|A\|^2 c_\rho}$. 
	For $\Gamma_2$, since $\nu_k=c_\nu k^{1/3} $ and $\alpha_{k+1}=c_\alpha k^{-2/3}$, $0<c_\alpha\leq 1$, we have
	\begin{equation}\label{inq1}
		\Gamma_2\overset{(i)}{\leq}\frac{1}{2\nu_k} + \gamma_{k+1}-\gamma_k -\alpha_{k+1}\gamma_{k+1}
		\overset{(ii)}{\leq} (\frac{1}{2c_\nu}+ \frac{c_\gamma}{3})(k-1)^{-2/3} -c_\alpha c_\gamma k^{-1/3},
	\end{equation}
	where $(i)$ holds by $(1-\alpha_{k+1})^2\leq 1-\alpha_{k+1}$ and $(ii)$ holds by $\gamma_{k+1}-\gamma_k\leq \frac{c_\gamma}{3}(k-1)^{-2/3}$ (the concavity of the function $x^{1/3}$).
	Here we consider two cases for \eqref{inq1}. When $k\ne 2$, one has $ (k-1)^{-2/3}\leq k^{-2/3}\leq k^{-1/3}$. From \eqref{inq1}, we have
	\[\Gamma_2\leq (\frac{1}{2c_\nu}+ \frac{c_\gamma}{3})k^{-1/3} -c_\alpha c_\gamma k^{-1/3}
	\leq -\frac{c_\alpha c_\gamma}{2} k^{-1/3},\]
	where last inequality holds by setting $\frac{1}{c_\nu c_\gamma}+\frac{2}{3}\leq c_\alpha\leq 1$ and $c_\nu c_\gamma \geq3$. If $k=2$, we can choose $c_\alpha=1$ and $c_\nu c_\gamma\geq (\frac{1} {\sqrt[3]{2}}-\frac{2}{3})^{-1}\approx 7.87$ such that $\frac{1}{2c_\nu}+ \frac{c_\gamma}{3} \leq \frac{c_\alpha c_\gamma} {2\sqrt[3]{2}}$, then
	\[\frac{1}{2c_\nu}+ \frac{c_\gamma}{3}-\frac{c_\alpha c_\gamma} {\sqrt[3]{2}}\leq -\frac{c_\alpha c_\gamma} {2\sqrt[3]{2}}.\]
	In conclusion, we can choose $c_\nu c_\gamma \geq 8$ and $\frac{1}{c_\nu c_\gamma}+\frac{2}{3}<0.8\leq c_\alpha\leq 1$ such that 
	\[\Gamma_2\leq -\frac{c_\alpha c_\gamma}{2} k^{-1/3}.\]
	For $\Gamma_3$, since $\beta_{k+1}\leq \frac{c_{\beta}}{k^{1/3} \ln^2(k+2)}$, we have
	\[\Gamma_3\leq \left(\frac{1}{3c_\rho}k^{-4/3}+\frac{2c_\beta k^{-1}}{c_\rho^2\ln^2(k+2)}\right)(L_g+\lambda_{\max})^2,\]
	Plugging these bounds into \eqref{psik1k} yields
	\begin{align}
		\psi_{k+1}-\psi_k\leq& 2c_\gamma c_\alpha^2\sigma^2 k^{-1} -\frac{\eta_k}{8}\mathbb{E}[\|\mathcal{G}_k\|^2]-\frac{c_\alpha c_\gamma}{2} k^{-1/3}\mathbb{E}[\|\varepsilon_k\|^2]\nonumber\\
		&+\left(\frac{1}{3c_\rho}k^{-4/3}+\frac{2c_\beta k^{-1}}{c_\rho^2\ln^2(k+2)}\right)(L_g+\lambda_{\max})^2. \label{psik1-psik}
	\end{align}
	By Lemma \ref{lem:MFlw}, telescoping \eqref{psik1-psik} from $k=1,\dots,K$ gives
	\[\begin{aligned}
		&\sum_{k=1}^{K}\frac{c_\alpha c_\gamma}{2} k^{-1/3}\mathbb{E}[\|\varepsilon_k\|^2]+\sum_{k=1}^{K}\frac{c_\eta }{8}k^{-1/3}\mathbb{E}[\|\mathcal{G}_k\|^2]\nonumber\\
		\leq&\psi_1-\psi_* +2c_\gamma c_\alpha^2\sigma^2 \sum_{k=1}^{K}k^{-1}+\sum_{k=1}^{K}\left(\frac{1}{3c_\rho}k^{-4/3}+\frac{2c_\beta k^{-1}}{c_\rho^2\ln^2(k+2)}\right)(L_g+\lambda_{\max})^2.
	\end{aligned}\]
	Since $\sum_{k=1}^{K}k^{-4/3}$ is convergence, there exists a constant $\mathcal{C}_1$ such that $\sum_{k=1}^{K}k^{-4/3}\leq \mathcal{C}_1$. Following \cite{sahin2019inexact}, there exists a constant $\mathcal{C}_2$ such that
	\[\sum_{k=1}^{K}\frac{k^{-1}}{\ln^2(k+2)}\leq \mathcal{C}_2.\]
	Let $\mathcal{C}:=\psi_1-\psi_* +2c_\gamma c_\alpha^2\sigma^2 \ln(K)+\left(\frac{\mathcal{C}_1}{3c_\rho}+\frac{2c_\beta \mathcal{C}_2}{c_\rho^2}\right)(L_g+\lambda_{\max})^2$. Thus, we have
	\[\sum_{k=1}^{K}\frac{c_\alpha c_\gamma}{2} k^{-1/3}\mathbb{E}[\|\varepsilon_k\|^2]+\sum_{k=1}^{K}\frac{c_\eta }{8}k^{-1/3}\mathbb{E}[\|\mathcal{G}_k\|^2]\leq \mathcal{C}.\]
	The proof is completed.
\end{proof}

\begin{theorem}[Complexity of MARS-ADMM]\label{the:complexity}
	Under the setting of Theorem \ref{the:sumG}. Then for any given positive integer $K>2$, there exists $\kappa \in [\lceil K/2\rceil, K]$ such that
	\[\begin{aligned}
		\mathbb{E}[\| \mathcal{P}_{T_{x_\kappa}\mathcal{M}} ( -A^\top \bar{\lambda}_\kappa) + {\rm grad}F(x_\kappa) \|] &\leq 4\sqrt{\mathcal{Q}}(K+1)^{-1/3}, \\
		\mathbb{E}[{\rm dist}(-\bar{\lambda}_\kappa ,\partial g(y_\kappa))] &\leq 2p \|A\|\sqrt{\mathcal{Q}}(K+1)^{-1/3},\\
		\mathbb{E}[\|Ax_\kappa - y_\kappa\|] &\leq 2\left(2p \|A\|c_\eta\sqrt{\mathcal{Q}}+\frac{L_g+\lambda_{\max}}{c_{\rho}}\right)(K-2)^{-1/3},
	\end{aligned}\]
	where $\mathcal{Q}=\frac{8\mathcal{C}}{\min\{4c_\alpha c_\gamma, c_\eta\}}$, $\mathcal{C}=\psi_1-\psi^* +2c_\gamma c_\alpha^2\sigma^2 \ln(K)+(\frac{\mathcal{C}_1}{3c_\rho}+\frac{2c_\beta \mathcal{C}_2}{c_\rho^2})(L_g+\lambda_{\max})^2$ and $\mathcal{C}_1,\mathcal{C}_2$ are constants.
\end{theorem}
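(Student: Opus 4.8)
The plan is to convert the summable bound of Theorem~\ref{the:sumG} into a per-iterate guarantee by a minimum-over-a-window (pigeonhole) argument, and then feed the resulting iterate into the three inequalities of Lemma~\ref{lem:relation-optima-x}. Writing $a_k=\mathbb{E}[\|\varepsilon_k\|^2]$ and $b_k=\mathbb{E}[\|\mathcal{G}_k\|^2]$, I would first merge the two weighted sums of Theorem~\ref{the:sumG} into one. Since $\tfrac{c_\alpha c_\gamma}{2}a_k+\tfrac{c_\eta}{8}b_k\ge \tfrac{\min\{4c_\alpha c_\gamma,\,c_\eta\}}{8}(a_k+b_k)$, the theorem yields $\sum_{k=1}^{K}k^{-1/3}(a_k+b_k)\le \mathcal{Q}$ with $\mathcal{Q}=\tfrac{8\mathcal{C}}{\min\{4c_\alpha c_\gamma,\,c_\eta\}}$, which is exactly the constant appearing in the statement.

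Next I would localize to a window aligned with the index shift $\mathcal{G}_{\kappa-1},\varepsilon_{\kappa-1}$ present in Lemma~\ref{lem:relation-optima-x}. Setting $T=\{\lceil K/2\rceil-1,\dots,K-1\}$, which is nonempty with all indices $\ge 1$ precisely because $K>2$, every $k\in T$ obeys $k^{-1/3}\ge (K+1)^{-1/3}$ and $|T|\ge (K+1)/2$, so $\sum_{k\in T}k^{-1/3}\ge \tfrac12 (K+1)^{2/3}$. Choosing $k^\ast=\arg\min_{k\in T}(a_k+b_k)$ and $\kappa=k^\ast+1\in[\lceil K/2\rceil,K]$, the standard fact that the minimum lies below the weighted average gives $(a_{k^\ast}+b_{k^\ast})\sum_{k\in T}k^{-1/3}\le \sum_{k\in T}k^{-1/3}(a_k+b_k)\le \mathcal{Q}$, whence $a_{k^\ast}+b_{k^\ast}\le 2\mathcal{Q}(K+1)^{-2/3}$.

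Finally I would apply Lemma~\ref{lem:relation-optima-x} at index $\kappa$, take full expectations of its (pathwise) inequalities, and use Jensen ($\mathbb{E}\|\cdot\|\le\sqrt{\mathbb{E}\|\cdot\|^2}$) together with the elementary inequality $\sqrt{a}+\sqrt{b}\le\sqrt{2(a+b)}$. For the stationarity residual this chains to $2(\mathbb{E}\|\mathcal{G}_{k^\ast}\|+\mathbb{E}\|\varepsilon_{k^\ast}\|)\le 2\sqrt{2(a_{k^\ast}+b_{k^\ast})}\le 4\sqrt{\mathcal{Q}}(K+1)^{-1/3}$, while the dual-feasibility bound follows directly from $\mathbb{E}\|\mathcal{G}_{k^\ast}\|\le\sqrt{b_{k^\ast}}\le\sqrt{2}\,\sqrt{\mathcal{Q}}(K+1)^{-1/3}\le 2\sqrt{\mathcal{Q}}(K+1)^{-1/3}$. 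For the primal-feasibility bound I would additionally insert $\rho_{\kappa-1}=c_\rho(\kappa-1)^{1/3}$ and $\eta_{\kappa-1}=c_\eta(\kappa-1)^{-1/3}$, and use $\kappa-1\ge (K-2)/2$ (so $(\kappa-1)^{-1/3}\le 2^{1/3}(K-2)^{-1/3}$ and $\eta_{\kappa-1}\le c_\eta$) together with $(K+1)^{-1/3}\le(K-2)^{-1/3}$ to collapse both terms onto the single rate $(K-2)^{-1/3}$.

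I expect the main obstacle to be bookkeeping rather than conceptual: getting the window $T$ to respect the $\kappa\mapsto\kappa-1$ shift so that $\kappa$ lands in $[\lceil K/2\rceil,K]$ while every summation index stays valid (which is where $K>2$ is used), and then checking that the deliberately loose constants ($2^{1/3}\le 2$, $\sqrt2\le 2$, $(K+1)^{-1/3}\le(K-2)^{-1/3}$) genuinely close into the advertised coefficients. The one step that requires a touch of care is bounding $\sqrt{a_{k^\ast}}+\sqrt{b_{k^\ast}}$ jointly through $\sqrt{2(a_{k^\ast}+b_{k^\ast})}$ rather than term by term, since the weaker separate estimate would lose a factor and miss the stated $4\sqrt{\mathcal{Q}}$.
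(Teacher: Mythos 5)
Your proposal is correct and follows essentially the same route as the paper's proof: a pigeonhole argument on the weighted sum from Theorem \ref{the:sumG} over the window $[\lceil K/2\rceil, K]$, a lower bound of order $(K+1)^{2/3}$ on the sum of weights, and then substitution of the selected index into the three inequalities of Lemma \ref{lem:relation-optima-x} with the stated stepsize and penalty bounds. The only differences are cosmetic — you lower-bound the weight sum by direct counting ($\tfrac12(K+1)^{2/3}$) where the paper uses an integral comparison ($\tfrac14(K+1)^{2/3}$), and you handle the $\kappa\mapsto\kappa-1$ index shift and the $\sqrt{a}+\sqrt{b}\le\sqrt{2(a+b)}$ step more explicitly, which if anything closes the constants more cleanly.
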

\begin{proof}
	It follows from Theorem \ref{the:sumG} that
	\[
		\min_{\lceil K/2\rceil\leq k\leq K} \left(\mathbb{E}[\|\varepsilon_{k-1}\|^2]+\mathbb{E}[\|\mathcal{G}_{k-1}\|^2]\right)\sum_{k=\lceil K/2\rceil}^{K} k^{-1/3}
		\leq \sum_{k=\lceil K/2\rceil}^{K} k^{-1/3}\left(\mathbb{E}[\|\varepsilon_{k-1}\|^2]+\mathbb{E}[\|\mathcal{G}_{k-1}\|^2]\right)\leq \mathcal{Q},
	\]
	where $\mathcal{Q}=\frac{8\mathcal{C}}{\min\{4c_\alpha c_\gamma, c_\eta\}}$. Then we bound $\sum_{k=\lceil K/2\rceil}^{K} k^{-1/3}$ as follows,
	\[\begin{aligned}
		\sum_{k=\lceil K/2 \rceil}^K k^{-1/3} & \overset{(i)}{\geq}\sum_{k=\lceil K/2 \rceil}^K  \int_k^{k+1} x^{-1 / 3} \mathrm{~d} x \\
		&= \sum_{k=\lceil K/2 \rceil}^K \frac{3}{2} \left[ (k+1)^{2/3} - k^{2/3} \right] \\
		&= \frac{3}{2} \left[ (K+1)^{2/3} - (\lceil K/2 \rceil)^{2/3} \right] \\
		&\geq \frac{3}{2} \left[ (K+1)^{2/3} - (K/2+1)^{2/3} \right] \\
		& \overset{(ii)}{\geq} \frac{3}{2} \left[ \frac{2}{3} (K+1)^{-1/3} ((K+1) - (K/2+1)) \right] \\
		& = \frac{1}{2} K (K+1)^{-1/3}  \geq \frac{1}{4} (K+1)^{2/3} ,
	\end{aligned}\]
	where $(i)$ holds because the function $x^{-1/3}$ is monotonically decreasing and $(ii)$ uses the concavity of the function $x^{2/3}$.
	Thus there exists $\kappa \in [\lceil K/2\rceil, K]$ such that
	\[
	\mathbb{E}[\|\mathcal{G}_{\kappa-1}\|^2]\leq\mathbb{E}[\|\varepsilon_{\kappa-1}\|^2]+\mathbb{E}[\|\mathcal{G}_{\kappa-1}\|^2]\leq 4\mathcal{Q}(K+1)^{-2/3},
	\]
	This together with Lemma \ref{lem:relation-optima-x}, we have
	\[
	\mathbb{E}[\|\mathcal{P}_{T_{x_\kappa}\mathcal{M}} (-A^\top \bar{\lambda}_\kappa) + {\rm grad}F(x_\kappa) \|^2] \leq 4(\mathbb{E}[\|\mathcal{G}_{\kappa-1}\|^2] +\mathbb{E}[\|\varepsilon_{\kappa-1}\|^2])\leq 16\mathcal{Q}(K+1)^{-2/3}, 
	\]
	and
	\[
	\mathbb{E}[{\rm dist}^2(-\bar{\lambda}_\kappa ,\partial g(y_\kappa))] \leq p^2 \|A\|^2 \mathbb{E}[\|\mathcal{G}_{\kappa-1}\|^2]\leq 4p^2 \|A\|^2\mathcal{Q}(K+1)^{-2/3}.
	\]
	Since $\eta_{\kappa-1}=c_\eta (\kappa-1)^{-1/3}\leq c_\eta (\frac{K-2}{2})^{-1/3}\leq 2c_\eta (K-2)^{-1/3}$ and $\rho_{\kappa-1}=c_\rho (\kappa-1)^{1/3}\geq c_\rho (\frac{K-2}{2})^{1/3}\geq \frac{c_\rho}{2} (K-2)^{1/3}$, it is easily shown that
	\[\begin{aligned}
		\mathbb{E}[\|Ax_\kappa - y_\kappa\|]  \leq &p \eta_{\kappa-1}\|A\| \mathbb{E}[\|\mathcal{G}_{\kappa-1}\|]+\frac{L_g+\lambda_{\max}}{\rho_{\kappa-1}}\\
		\leq &4p \|A\|c_\eta\sqrt{\mathcal{Q}} (K-2)^{-1/3}(K+1)^{-1/3}+\frac{2(L_g+\lambda_{\max})}{c_{\rho}}(K-2)^{-1/3}\\
		\leq & 2\left(2p \|A\|c_\eta\sqrt{\mathcal{Q}}+\frac{L_g+\lambda_{\max}}{c_{\rho}}\right)(K-2)^{-1/3}.
	\end{aligned}\]
	The proof is completed.
\end{proof}

Theorem \ref{the:complexity} establishes that, given $\epsilon>0$, MARS-ADMM algorithm achieves an oracle complexity of $\tilde{\mathcal{O}}(\epsilon^{-3})$. Specifically, in the noiseless case where $\sigma^2=0$, this result can be improved to  $\mathcal{O}(\epsilon^{-3})$. Although StoManIAL \cite{deng2024oracle} applies the same strategies to the augmented Lagrangian method, it achieves only $\tilde{\mathcal{O}}(\epsilon^{-3.5})$ in the stochastic setting, falling short of the result $\mathcal{O}(\epsilon^{-3})$ attained by its deterministic counterpart. In contrast, our stochastic method achieves the same order of complexity, matching the deterministic counterpart from \cite{dengadaptive}. This improvement is due to the single-loop structure of MARS-ADMM, in which the stepsize $\eta_k$ is explicitly coupled to the penalty parameter $\rho_k$ as $\eta_k \propto 1/\rho_k$. This coupling in \eqref{x-sub} stabilizes the update magnitude $\|x_{k+1}-x_k\|\approx\|\eta_k \mathcal{G}_k\|\approx (c_\eta/\rho_k)\mathcal{O}(\rho_k)=\mathcal{O}(1)$, thereby neutralizing the adverse effect of an increasing $\rho_k$. Conversely, StoManIAL uses an inner solver whose condition number worsens as $\rho_k$ increases, resulting in a poorer complexity for the  double-loop framework.

The penalty parameter $\rho_k = c_\rho k^{1/3}$ is designed to increase monotonically, a strategy that is well supported in theory \cite{he2000alternating,lin2011linearized,dolgopolik2022exact}. Its growth is moderate in practice; for example, after $K=1000$ iterations, $\rho_K = 10\rho_1$, thus avoiding the numerical instability of more aggressive schedules. As some algorithmic conditions depend on $c_\rho$, we recommend determining this value via grid searches prior to tuning other parameters.

Finally, our complexity bounds are independent of the batch size $|\mathcal{S}_k|$ and remain valid even for a minimal batch size of $O(1)$. This provides considerable flexibility in implementation, particularly in settings with limited resources. In practice, however, extremely small batch sizes can lead to erratic convergence and heightened parameter sensitivity despite the asymptotic guarantees. We therefore recommend choosing a batch size that balances computational cost with practical stability, rather than the minimum size.

\section{Experiments}\label{sec5}
This section demonstrates the performance of our MARS-ADMM algorithm. All experiments are performed in MATLAB R2023b on a 64-bit laptop equipped with Intel i9-13900HX CPU and 32.0 GB RAM. In each numerical experiment, the results are averaged across 10 repeated experiments with random initializations. 
We compare MARS-ADMM with three existing Riemannian stochastic algorithms: Riemannian stochastic subgradient method (R-Subgrad) in  \cite{li2021weakly}, Riemannian stochastic proximal gradient descent method (R-ProxSGD) and Riemannian proximal SpiderBoost method (R-ProxSPB) in \cite{wang2022riemannian}. 
\subsection{Sparse Principal Component Analysis}
Sparse principal component analysis (SPCA) \cite{journee2010generalized,yang2015streaming} is a key technique for high-dimensional data analysis, identifying principal components with sparse loadings.
Given a dataset $\{z_1,\dots,z_m\}\in\mathbb{R}^{n\times m}$, a formulation of SPCA problem on the Stiefel manifold ${\rm St}(n, p) := \{X\in\mathbb{R}^{n\times p}:X^\top X=I_p\}$ is as follows:
\begin{equation}\label{spca}
	\min_{X\in {\rm St}(n,p)}\sum_{i=1}^{m}\|z_i-XX^\top z_i\|^2+\mu \|X\|_1,
\end{equation}
where $\mu>0$ is a regularization parameter.
The tangent space of ${\rm St}(n, p)$ is defined as $T_x {\rm St}(n, p) = \{\theta\in\mathbb{R}^{n\times p}:X^\top\theta+\theta^\top X=0\}$. Given any $U\in\mathbb{R}^{n\times p}$, the projection of $U$ onto $T_x {\rm St}(n, p) $ is $\mathcal{P}_{T_x {\rm St}(n, p)}(U)=U-\frac{1}{2}(U^\top X+X^\top U)X$ \cite{absil2009optimization}.

In this experiment, we first run the MARS-ADMM and terminate it when either the objective function value satisfies  $\|F(X_k)-F(X_{k-1})\|\leq 10^{-6}$ or the maximum of 1500 iterations is reached. Denote the obtained function value of MARS-ADMM as $F_{ADMM}$. We terminate the other algorithms when either the objective function value satisfies $F(X_k) \leq F_{ADMM}$ or the iteration limit is reached. All algorithms use the same random initializations in each test. For R-Subgrad, we use the diminishing stepsize $\eta_k=\frac{\eta_0}{\sqrt{k+1}}$ with $\eta_0=0.001$. For R-ProxSGD and R-ProxSPB, we use the same settings as in the original paper. For MARS-ADMM, we set $\beta_1=c_\rho=50$, $c_\eta=0.01$, $c_\alpha=0.8$, $c_\beta=0.75$ and $|\mathcal{S}_k|\equiv 50$. 

\begin{figure*}[h]
	\centering
	\begin{subfigure}[]{0.328\linewidth}
		\centering
		\includegraphics[width=1\linewidth]{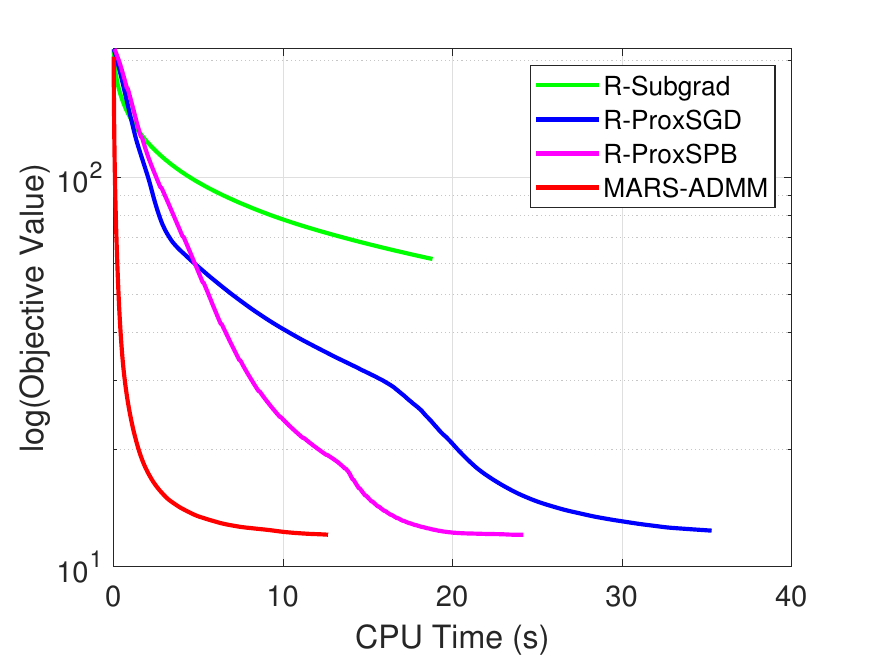}
		\includegraphics[width=1\linewidth]{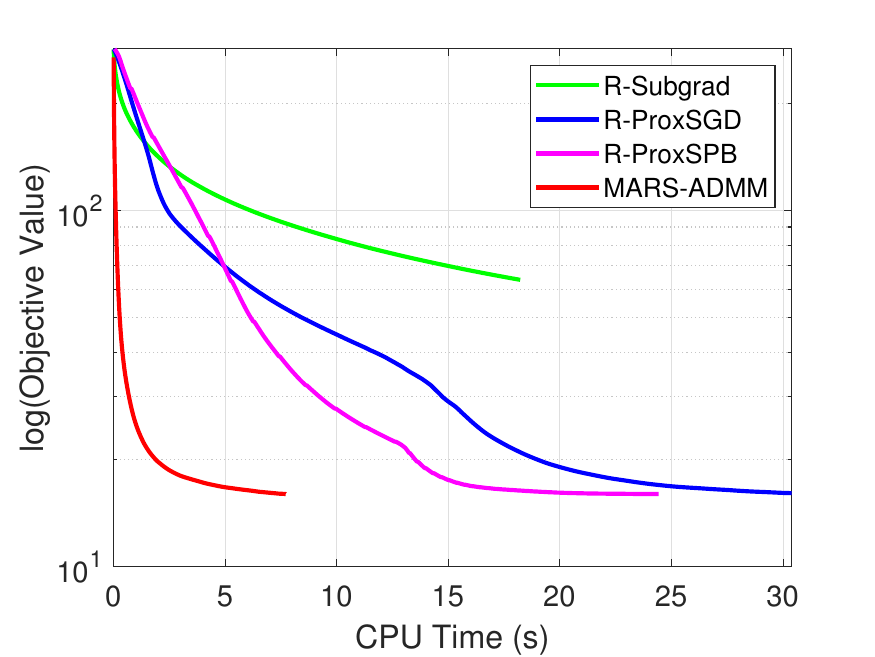}
		\caption{$p=20$}
	\end{subfigure}
	\begin{subfigure}[]{0.328\linewidth}
		\centering
		\includegraphics[width=1\linewidth]{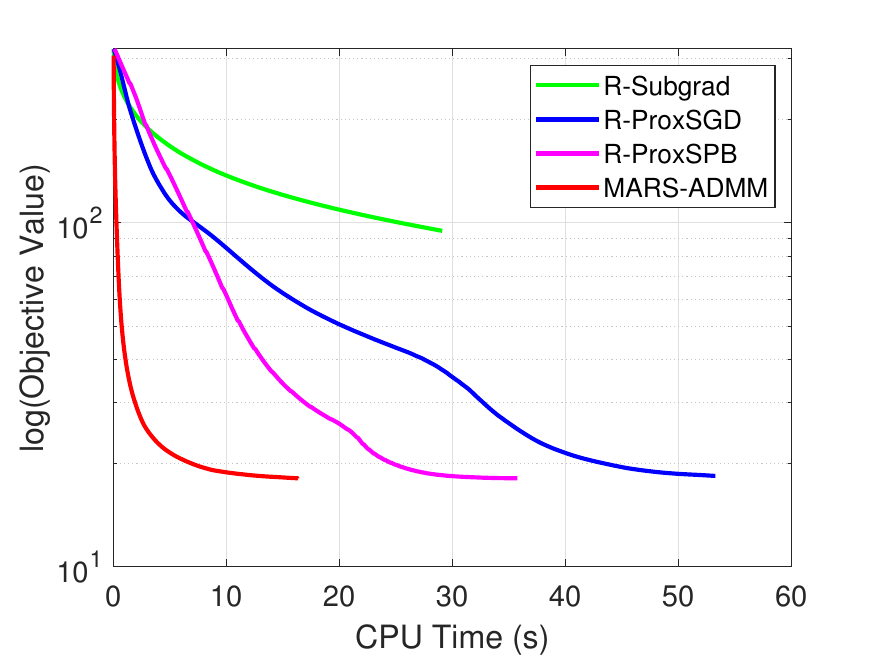}
		\includegraphics[width=1\linewidth]{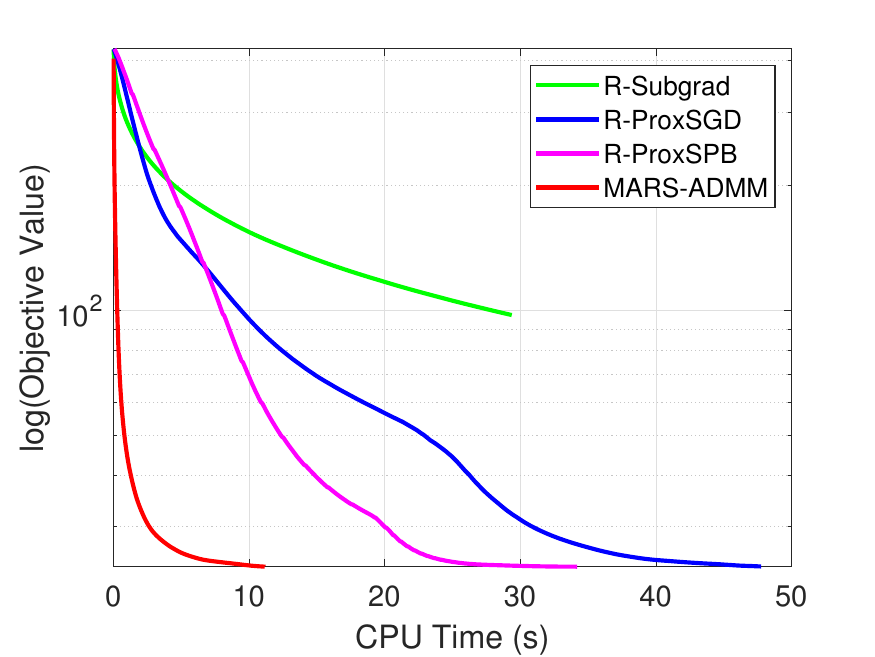}
		\caption{$p=30$}
	\end{subfigure}
	\begin{subfigure}[]{0.328\linewidth}
		\centering
		\includegraphics[width=1\linewidth]{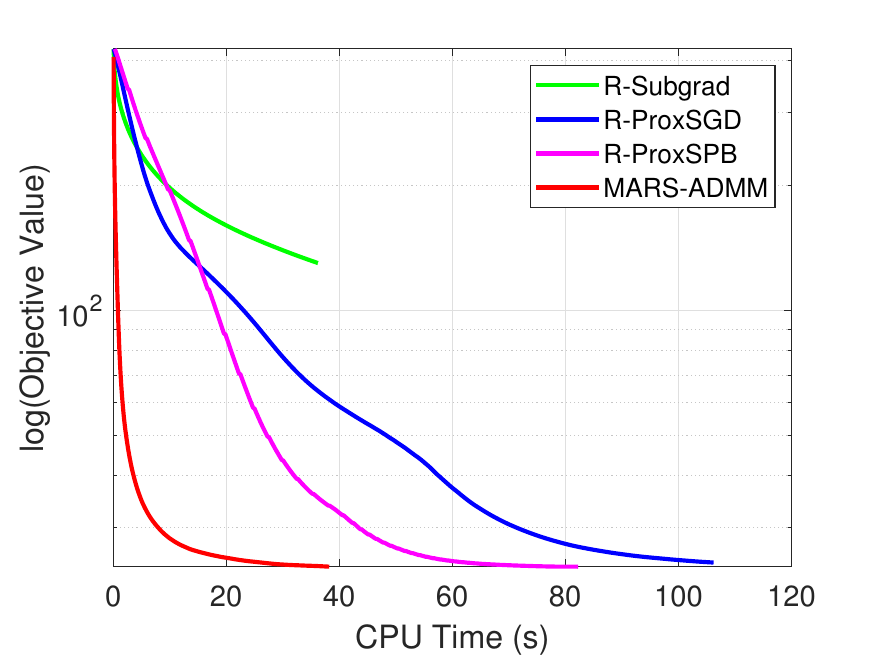}
		\includegraphics[width=1\linewidth]{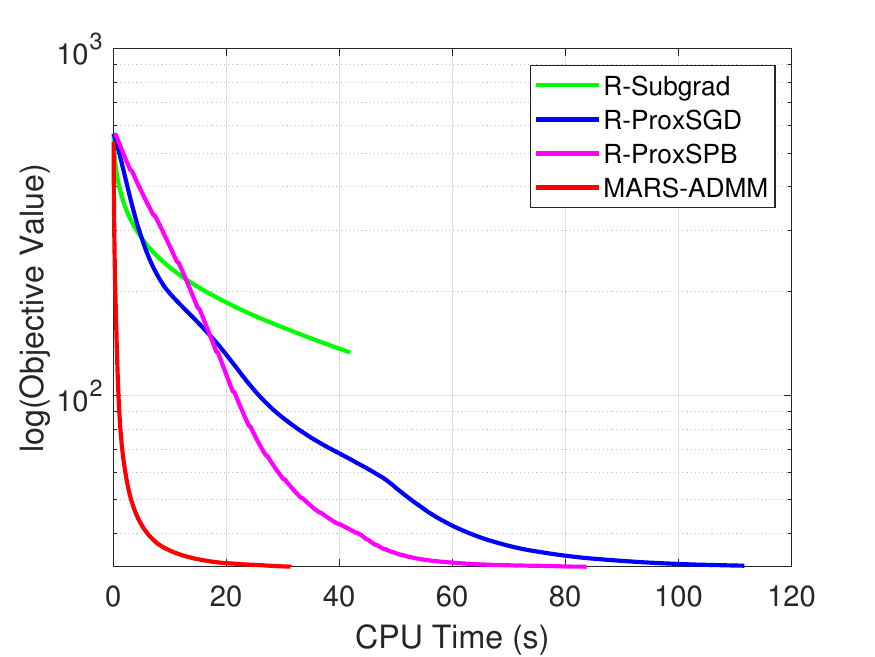}
		\caption{$p=40$}
	\end{subfigure}
	\caption{Comparison of R-Subgrad, R-ProxSGD, R-ProxSPB and MARS-ADMM for solving problem \eqref{spca} with synthetic data. The first row: $\mu=0.4$ and the second row: $\mu=0.8$.}
	\label{SPCA-fig2}
\end{figure*}

{\bf Synthetic dataset}: 
We set $m=5000$ and $n=500$ to randomly generate the data matrix, in which all entries follow a standard Gaussian distribution. The columns are then shifted to have a zero mean, after which the column vectors are normalized. Figure \ref{SPCA-fig2} clearly shows that MARS-ADMM outperforms all the other methods.

{\bf Real dataset}: We conduct experiments on two real datasets: {\it coil100} \cite{nene1996columbia} and {\it MNIST} \cite{lecun1998mnist}. The coil100 dataset contains $m = 7200$ RGB images of 100 objects taken from different angles with $n = 1024$. The MNIST dataset contains $m = 60000$ grayscale digit images, each measuring $n= 784$ pixels. As Figure \ref{SPCA-fig3} shows, both MARS-ADMM and R-ProxSPB achieve better performance. Moreover, MARS-ADMM converges faster with a single-loop structure than R-ProxSPB, which uses periodic full-gradient evaluations.

\begin{figure*}[]
	\centering
	\begin{subfigure}[]{0.328\linewidth}
		\centering
		\includegraphics[width=1\linewidth]{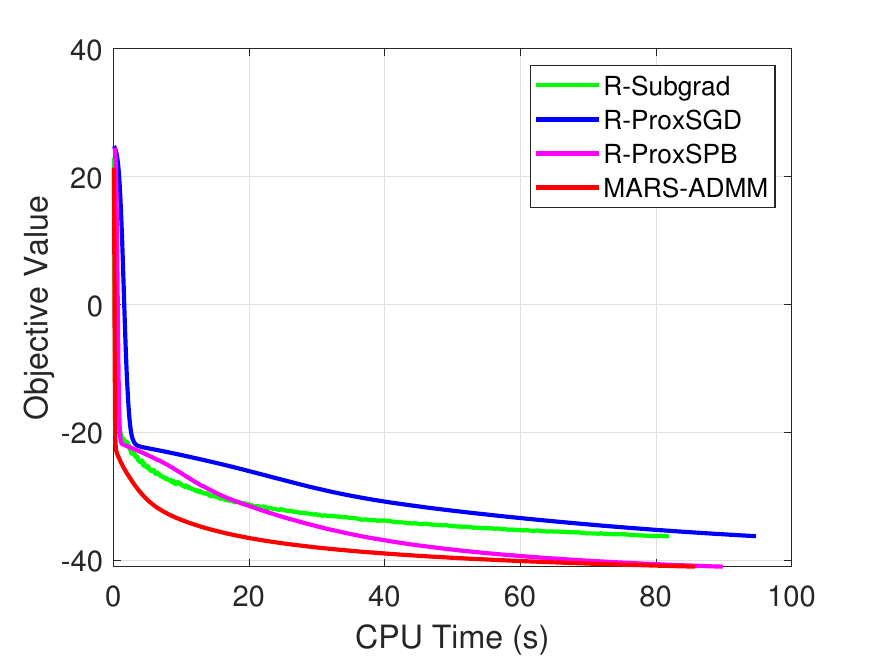}
		\includegraphics[width=1\linewidth]{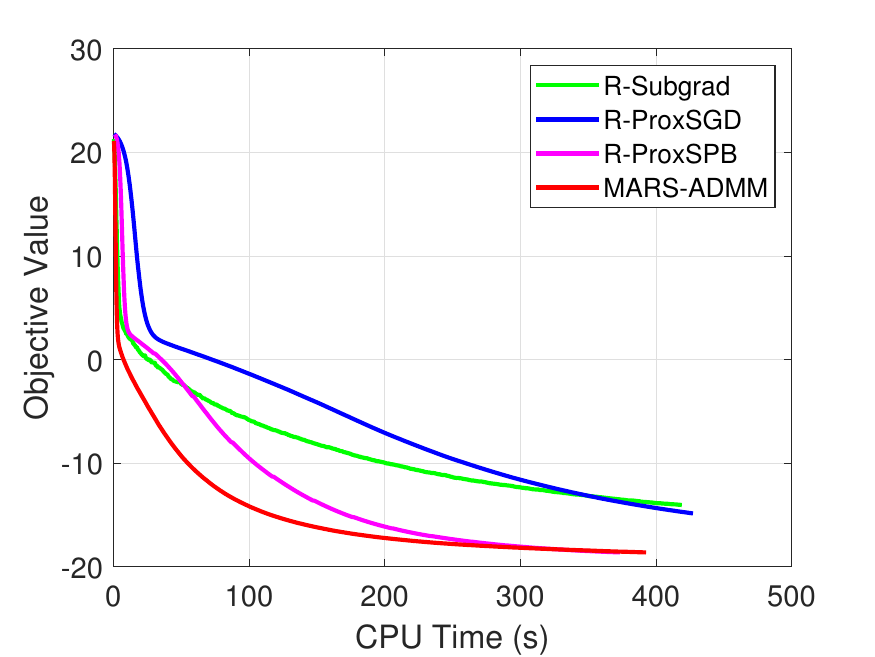}
		\caption{$p=10,\mu=0.1$}
	\end{subfigure}
	\begin{subfigure}[]{0.328\linewidth}
		\centering
		\includegraphics[width=1\linewidth]{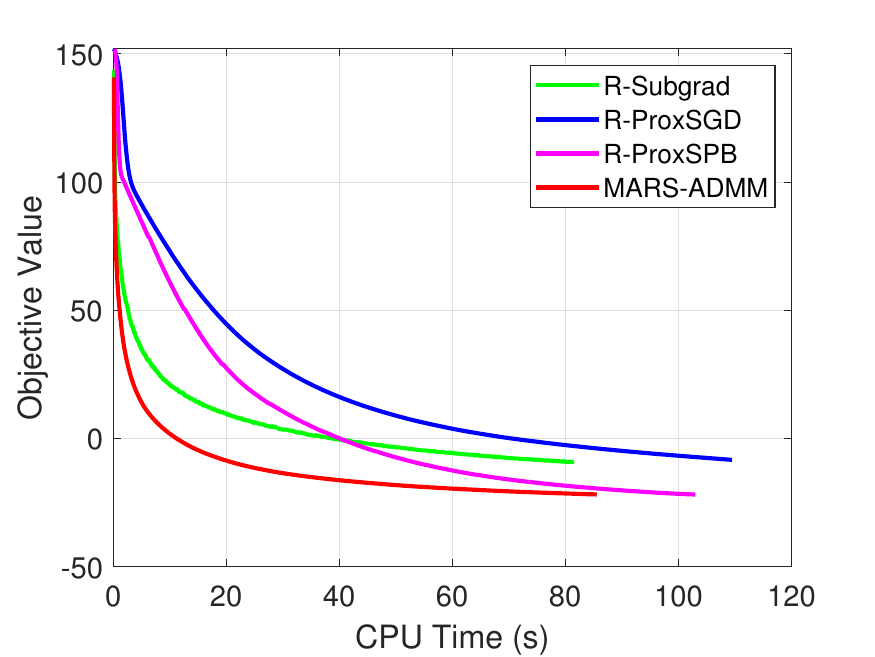}
		\includegraphics[width=1\linewidth]{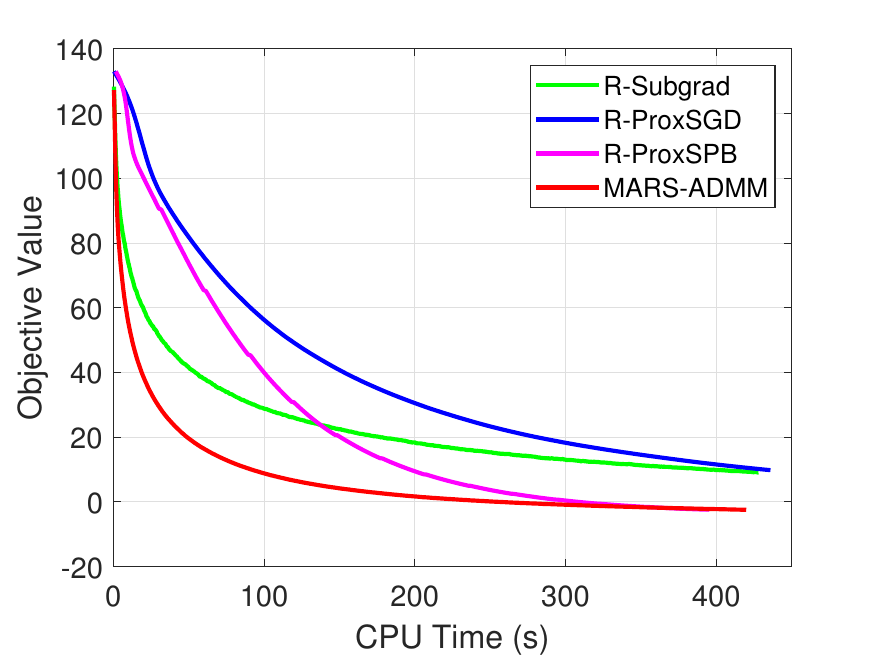}
		\caption{$p=15,\mu=0.4$}
	\end{subfigure}
	\begin{subfigure}[]{0.328\linewidth}
		\centering
		\includegraphics[width=1\linewidth]{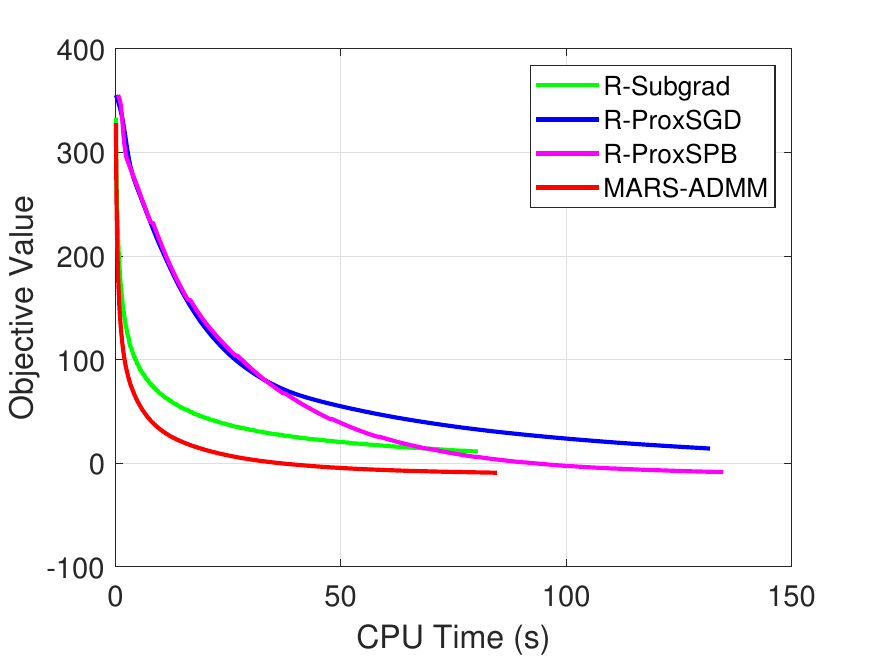}
		\includegraphics[width=1\linewidth]{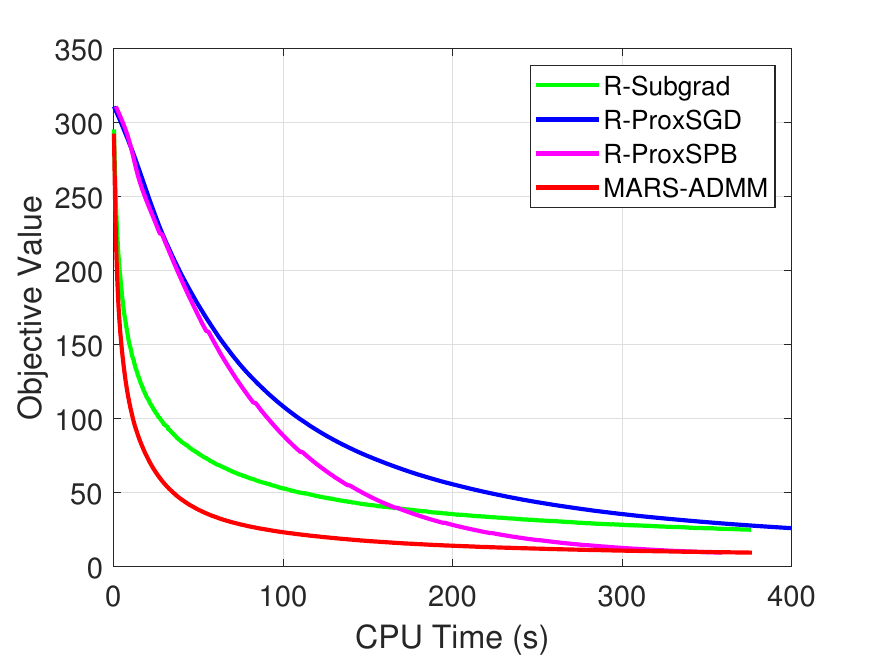}
		\caption{$p=20,\mu=0.7$}
	\end{subfigure}
	\caption{Comparison of R-Subgrad, R-ProxSGD, R-ProxSPB and MARS-ADMM for solving SPCA problem \eqref{spca} with real data. The first row is {\it coil100} and the second row is {\it MNIST}.}
	\label{SPCA-fig3}
\end{figure*}

\subsection{Regularized Linear Classification Over Sphere}
Optimization over sphere manifolds arises in empirical risk minimization when the model parameters represent probability distributions, proportions, or normalized weights \cite{li2020methods}. 
Consider a classification problem with $N$ training examples $\{a_i,b_i\}_{i=1}^N$, where $a_i\in\mathbb{R}^{m \times 1}$ and $b_i\in\{-1,1\}$ for all $i \in [N]$. The goal is to estimate a linear classifier parameter in the sphere manifold, $x \in {\rm S}^{m-1}:=\{x\in\mathbb{R}^m:~x^\top x=1\}$, that minimizes a smooth nonconvex loss function \cite{zhang2023riemannian} with $\ell_1$-regularization as
\begin{equation}\label{rlc}
	\min_{x\in{\rm S}^{m-1}}~\sum_{i=1}^{N}\left(1-\frac{1}{1+\exp(-b_i x^\top a_i)}\right)^2+\mu \|x\|_1.
\end{equation}

{\bf Synthetic dataset}: To simulate the data, the true parameter $x$ is sampled from $\mathcal{N}(0, I_m)$ and normalized to $\mathbb{S}^{m-1}$. The features $\{a_i\}_{i=1}^N$ are drawn uniformly at random, and the corresponding labels $b_i$ to $a_i$ are assigned as
\[b_i=\begin{cases}
	1 & {\rm if}~x^\top a_i +\epsilon_i>0,\\
	-1 & {\rm otherwise},
\end{cases}\]
where the noise $\epsilon_i \sim \mathcal{N}(0, \sigma^2)$. 
The batch sizes for R-ProxSGD, R-ProxSPB and MARS-ADMM are all set to 100. The stepsizes for R-Subgrad, R-ProxSGD and R-ProxSPB are set to $\frac{0.005}{\sqrt{k}}$, $\frac{0.01}{\sqrt{k}}$ and $\frac{0.015}{\sqrt{k}}$, respectively. R-ProxSPB uses a snapshot frequency of 50. For MARS-ADMM, we use the same settings as in the SPCA experiments. Figure \ref{RLC-fig1} shows that MARS-ADMM achieves lower objective values at a faster rate than the existing methods, demonstrating its efficiency and solution quality. 

\begin{figure*}[h]
	\centering
	\begin{subfigure}[]{0.365\linewidth}
		\centering
		\includegraphics[width=1\linewidth]{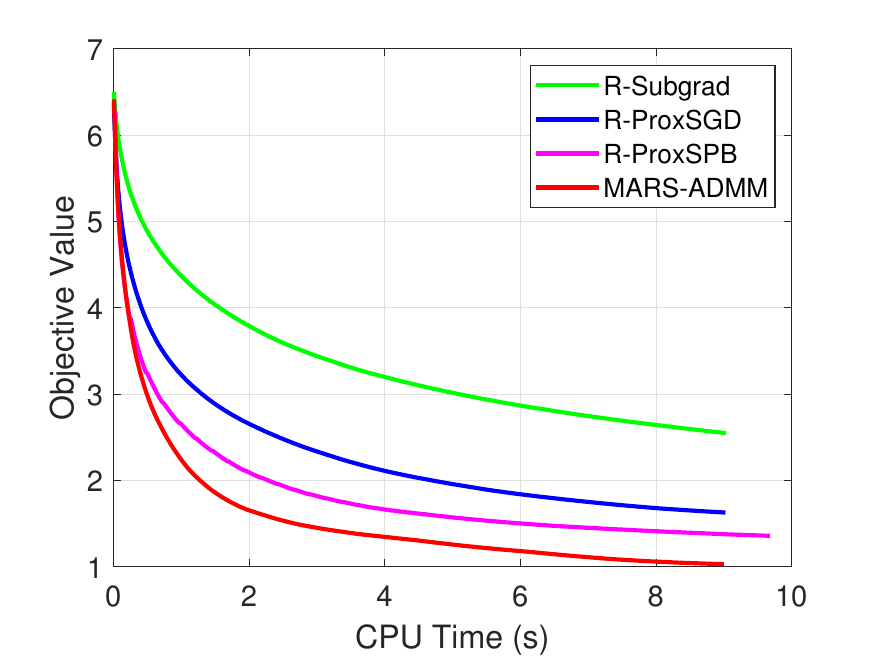}
		\caption{$m=250,N=20000$}
	\end{subfigure}
	\begin{subfigure}[]{0.365\linewidth}
		\centering
		\includegraphics[width=1\linewidth]{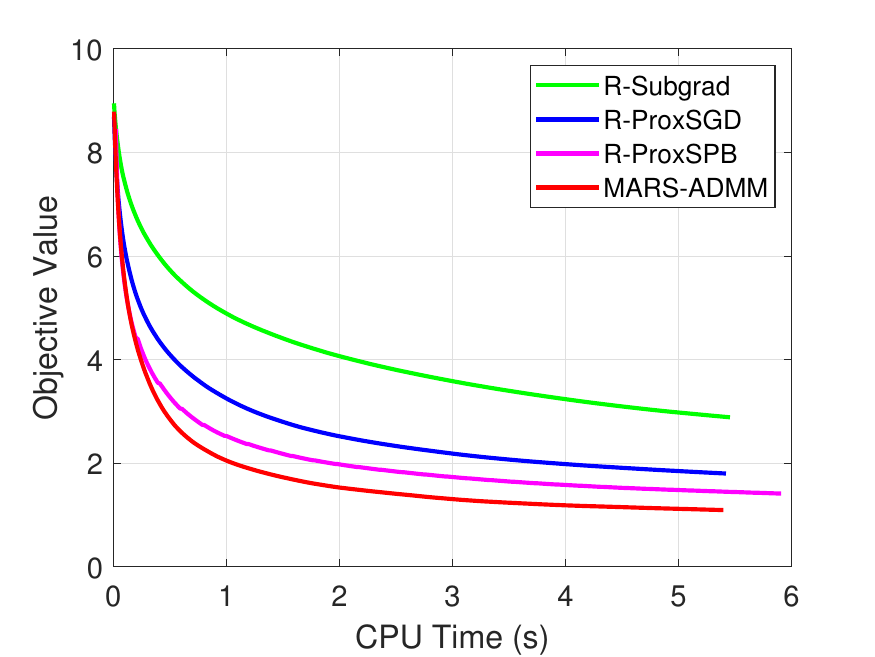}
		\caption{$m=500,N=10000$}
	\end{subfigure}
	\begin{subfigure}[]{0.365\linewidth}
		\centering
		\includegraphics[width=1\linewidth]{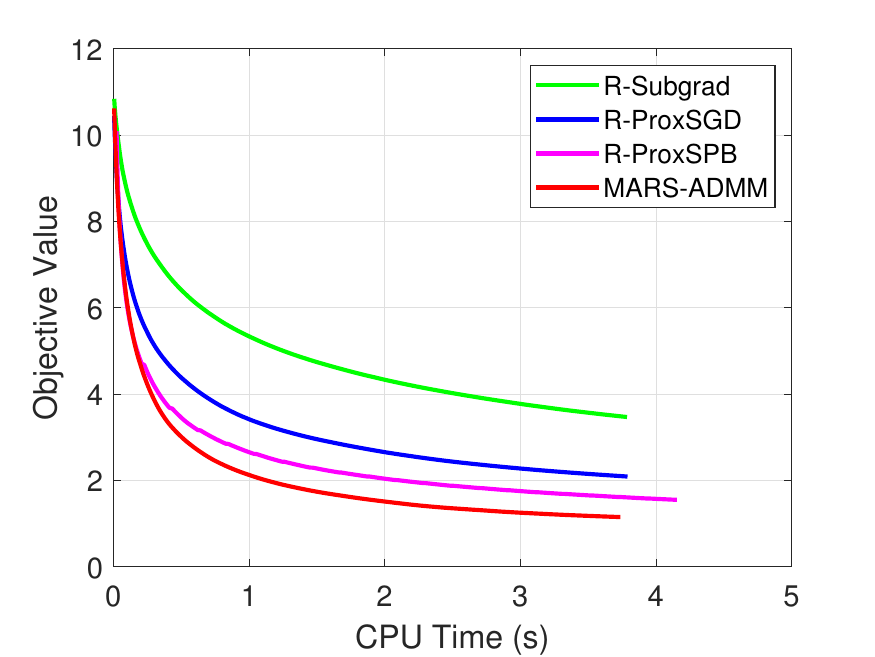}
		\caption{$m=750,N=7500$}
	\end{subfigure}
	\begin{subfigure}[]{0.365\linewidth}
		\centering
		\includegraphics[width=1\linewidth]{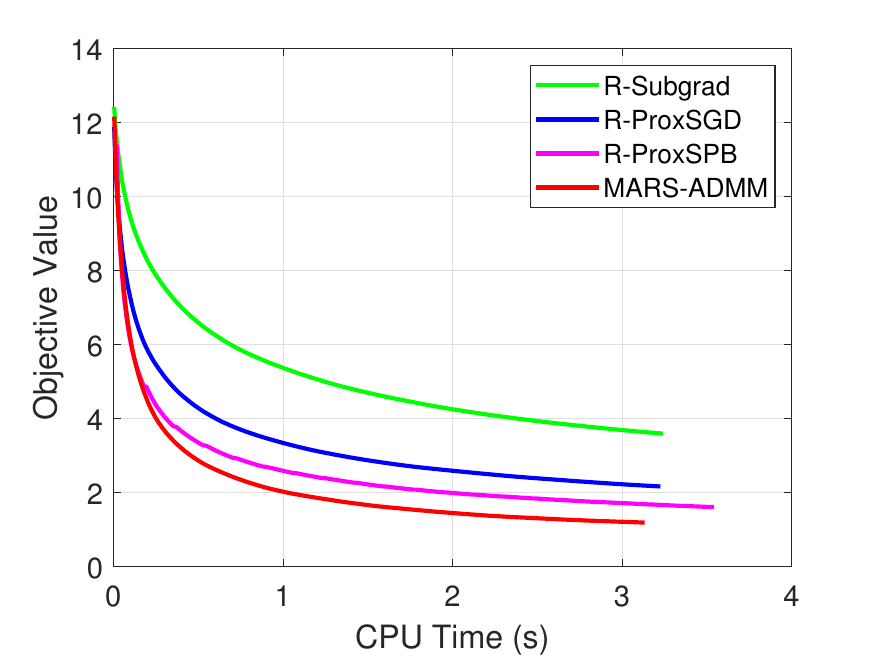}
		\caption{$m=1000,N=5000$}
	\end{subfigure}
	\caption{Comparison of R-Subgrad, R-ProxSGD, R-ProxSPB and MARS-ADMM for solving problem \eqref{rlc} with synthetic data and $\mu=\sigma^2=0.5$.}
	\label{RLC-fig1}
\end{figure*}

Table \ref{RLC-table3} reports the performance for different levels of noise added to the simulated data. Here, we only compare MARS-ADMM with R-ProxSPB since the former uses variance reduction technology and outperformed both R-Subgrad and R-ProxSGD in Figure \ref{RLC-fig1}. The batch size was adjusted to 500 for both R-ProxSPB and MARS-ADMM. As can be seen, both MARS-ADMM and R-ProxSPB achieve a comparable decrease in the objective function, but MARS-ADMM is faster. This suggests that the recursive momentum and adaptive dual parameters effectively enable the proposed algorithm to converge in all scenarios.

\begin{table}[]
	\centering
	\caption{Effect of the added noise on the performance of R-ProxSPB and MARS-ADMM for solving problem \eqref{rlc} with $m=10$ and $N=5\times 10^5$.}
	\begin{tabular}{ccccc}
		\toprule
		Setting  & \multicolumn{2}{c}{R-ProxSPB} & \multicolumn{2}{c}{MARS-ADMM} \\ 
		\cmidrule(r){2-3} \cmidrule(r){4-5} 
		$(\mu,\sigma^2)$ & Obj & Time (s) & Obj & Time (s) \\ 
		\midrule
		(0.01,~0.01) & 0.2872 & 40.17 & 0.2735 & 38.48 \\
		(0.25,~1) & 0.7590 & 41.26 &  0.7144 &  39.32 \\
		(0.5,~5) & 1.0924 & 42.01 &  0.9897 &  40.22 \\
		(0.75,~10) & 1.3505 & 43.61 &  1.2288 & 42.59 \\
		\bottomrule
	\end{tabular}
	\label{RLC-table3}
\end{table}

{\bf Real dataset}: We conduct experiments on three real datasets: {\it phishing}, {\it a9a} and {\it w8a}. The test instances are sourced from the LIBSVM datasets \cite{chang2011libsvm}. The phishing dataset has $m=68$ and $N=11055$. The a9a dataset has $m=123$ and $N=32561$. The w8a dataset has $m=300$ and $N=49749$. From the Figure \ref{RLC-fig2}, it can be observed that R-ProxSGD and R-ProxSPB address this problem very well, albeit still needing multiple inner iterations to converge. In contrast, MARS-ADMM achieves convergence in a single loop, outperforming all compared algorithms. Furthermore, when applied to different datasets with same parameter settings, MARS-ADMM exhibits robust performance, indicating its general applicability.

\begin{figure*}[h]
	\centering
	\begin{subfigure}[]{0.328\linewidth}
		\centering
		\includegraphics[width=1\linewidth]{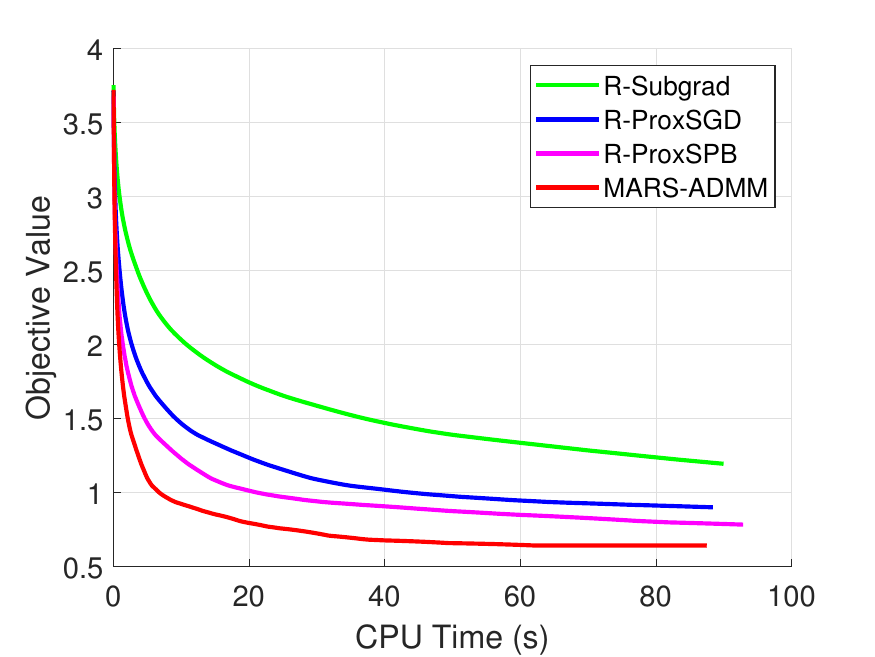}
		\includegraphics[width=1\linewidth]{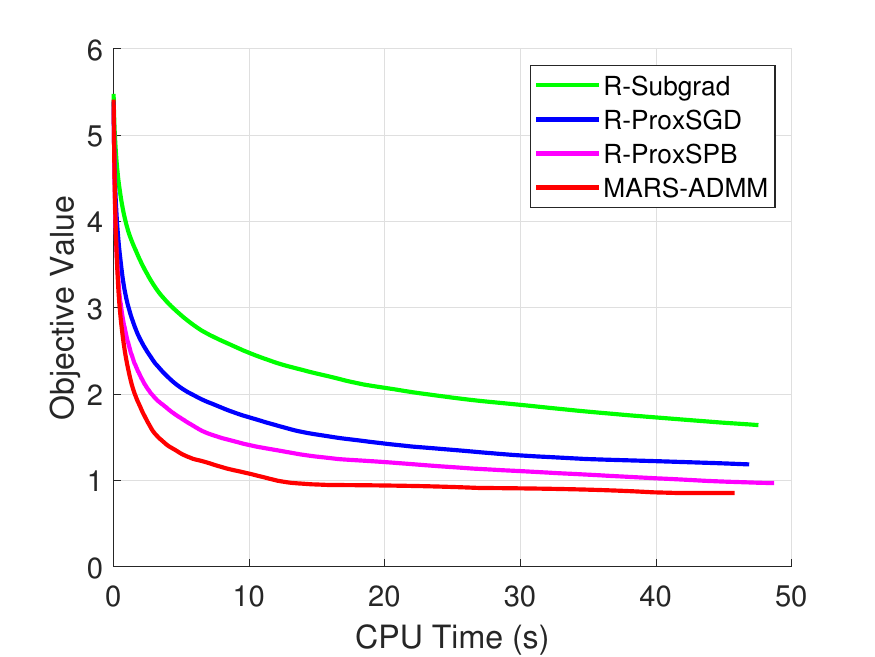}
		\caption{a9a}
	\end{subfigure}
	\begin{subfigure}[]{0.328\linewidth}
		\centering
		\includegraphics[width=1\linewidth]{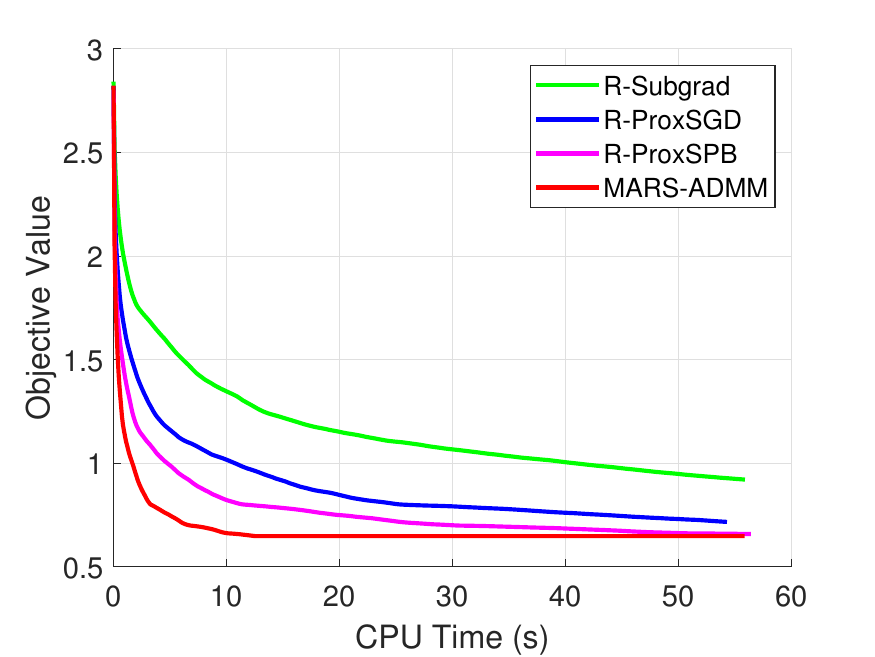}
		\includegraphics[width=1\linewidth]{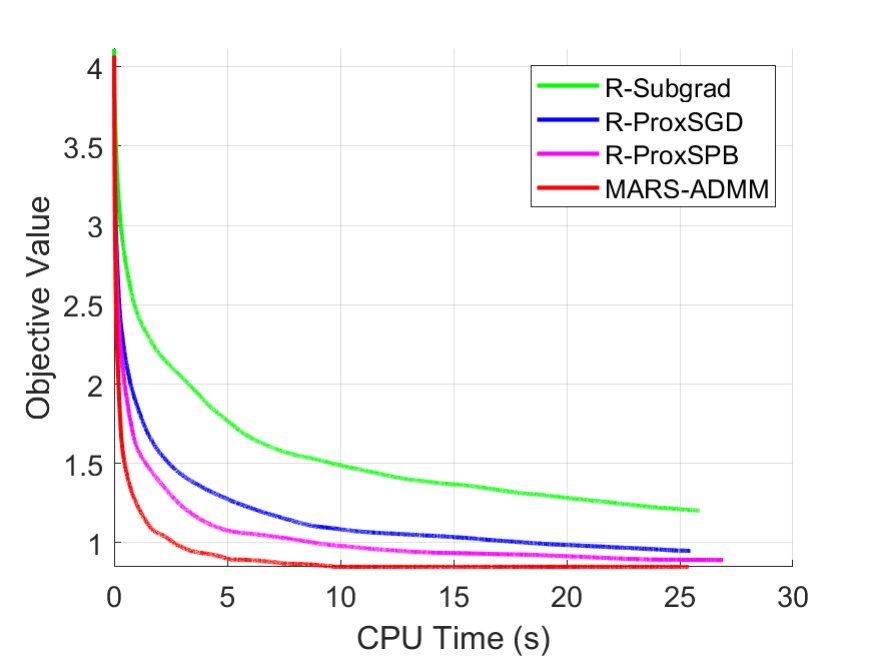}
		\caption{phishing}
	\end{subfigure}
	\begin{subfigure}[]{0.328\linewidth}
		\centering
		\includegraphics[width=1\linewidth]{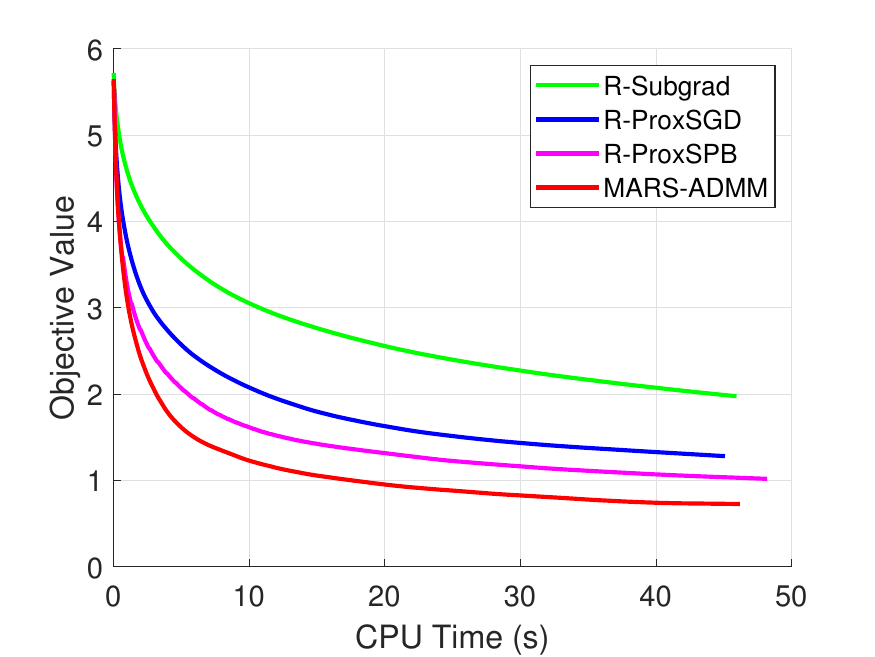}
		\includegraphics[width=1\linewidth]{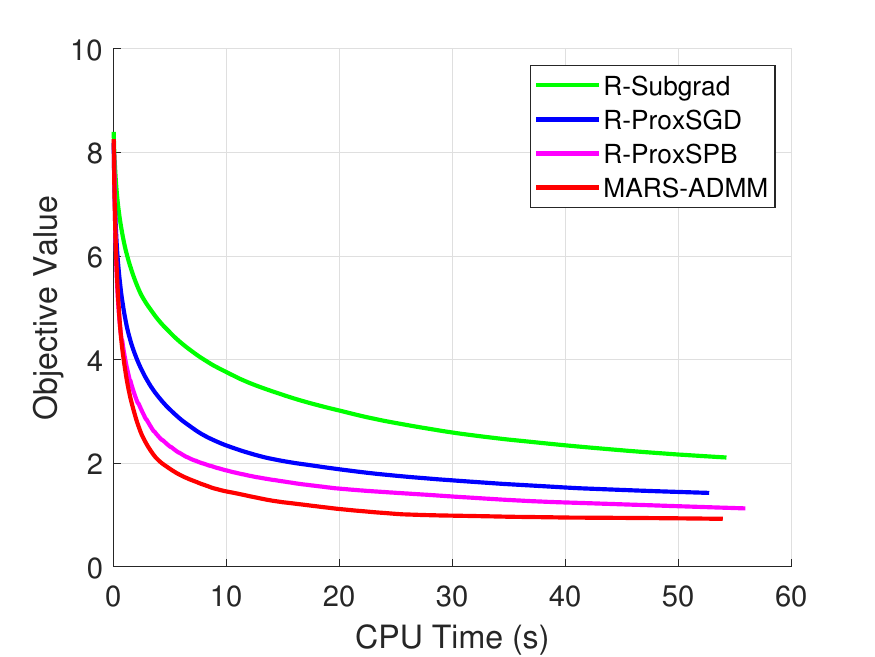}
		\caption{w8a}
	\end{subfigure}
	\caption{Comparison of R-Subgrad, R-ProxSGD, R-ProxSPB and MARS-ADMM for solving problem \eqref{rlc} with real data. The first row: $\mu=0.4$ and the second row: $\mu=0.6$.}
	\label{RLC-fig2}
\end{figure*}


\section{Conclusions}\label{sec6}
This paper investigated the design of a stochastic ADMM algorithm for nonsmooth composite optimization over Riemannian manifolds. We proposed the MARS-ADMM method, which integrates a momentum-based stochastic gradient estimator with an adaptive penalty update tailored for the manifold constraint. Our analysis shows that the algorithm finds an $\epsilon$-stationary point with a near-optimal oracle complexity of $\tilde{\mathcal{O}}(\epsilon^{-3})$. This establishes the first stochastic Riemannian ADMM with theoretical guarantees and improves upon the best-known bounds for stochastic Riemannian operator-splitting methods. It also demonstrates that deterministic-level convergence can be attained using only a constant number of stochastic gradient samples per iteration. Extensive experiments on sparse PCA and regularized classification over the sphere manifold demonstrate the algorithm's superior performance.

\bibliographystyle{siamplain}
\bibliography{references}

\end{document}